\documentclass{article}

\usepackage{
amsmath,
amsthm,
amscd,
amssymb,
wasysym
}
\usepackage{comment}
\usepackage{tikz}
\usetikzlibrary{positioning,arrows,calc}
\usepackage{graphicx}

\usepackage{url}

\theoremstyle{plain}
\newtheorem{lemma}{Lemma}
\newtheorem{corollary}{Corollary}

\newtheorem*{theorem*}{Theorem}
\newtheorem{theorem}{Theorem}

\newtheorem{question}{Question}
\newtheorem{remark}{Remark}

\newtheoremstyle{derp}
{3pt}
{3pt}
{}
{}
{\bfseries}
{:}
{.5em}
{}

\theoremstyle{derp}
\newtheorem{example}{Example} 

\theoremstyle{example}
\newtheorem{definition}{Definition}

\newcommand{\Z}{\mathbb{Z}}

\newcommand{\N}{\mathbb{N}}

\newcommand\xqed[1]{
  \leavevmode\unskip\penalty9999 \hbox{}\nobreak\hfill
  \quad\hbox{#1}}
\newcommand\qee{\xqed{$\fullmoon$}}

\newcommand{\End}{\mathrm{End}}
\newcommand{\Aut}{\mathrm{Aut}}

\newcommand{\Hom}{\mathrm{Hom}}

\newcommand{\SSS}{\mathcal{S}}
\newcommand{\id}{\mathrm{id}}

\begin{document}


%
%

\title{Graph and wreath products of cellular automata}

\author{Ville Salo \\ 
University of Turku, Finland, \\
vosalo@utu.fi}

\maketitle


\begin{abstract}
We prove that the set of subgroups of the automorphism group of a two-sided full shift is closed under countable graph products. We introduce the notion of a group action without $A$-cancellation (for an abelian group $A$), and show that when $A$ is a finite abelian group and $G$ is a group of cellular automata whose action does not have $A$-cancellation, the wreath product $A \wr G$ embeds in the automorphism group of a full shift. We show that all free abelian groups and free groups admit such cellular automata actions. In the one-sided case, we prove variants of these results with reasonable alphabet blow-ups.
\end{abstract}


%

%





\section{Introduction}

A recent trend in symbolic dynamics is the study of automorphism groups of subshifts, both in ``simple subshifts'' of various kinds and in situations closer to a full shift \cite{Ol13,SaTo15d,Sa14d,CyKr16a,CoQuYa16,CyKr16b,DoDuMaPe16,DoDuMaPe17,CyFrKrPe18,FrScTa19,Sa18d,Sa19a,BaKaSa16,HaKrSc21}. One interesting question in all of the settings is which (kinds of) groups can be abstractly embedded in automorphism groups of subshifts, as a function of dynamical restrictions put on the subshift.

The most classical, and in some sense simplest, case is that of (reversible) cellular automata: what groups can be embedded as subgroups of automorphism groups of full shifts $A^\Z$? The class of groups that embed is known to be quite rich, but it is not very well understood. The study of this family $\mathcal{G}$ began in Hedlund's 1969 paper \cite{He69} where it was shown that finite groups and the infinite dihedral group are in $\mathcal{G}$. It is known \cite{KiRo90} that automorphism groups of full shifts embed in each other, so $\mathcal{G}$ does not depend on the alphabet of the full shift.

For this paper, the most relevant facts known about $\mathcal{G}$ are the following:
\begin{enumerate}
\item $\mathcal{G}$ is closed under countable direct sums and free products, \cite{Sa18d}
\item $\mathcal{G}$ contains all graph groups, \cite{KiRo90}
\item $\mathcal{G}$ contains the lamplighter group $\Z_2 \wr \Z$, \cite{Sa18a}
\end{enumerate}

For context, we list some other known facts about $\mathcal{G}$. It is commensurability invariant \cite[Proposition~3.1]{KiRo90}, groups in $\mathcal{G}$ are residually finite \cite{BoLiRu88}, finitely generated (f.g.) groups in $\mathcal{G}$ have word problem in co-NP and this problem is co-NP-complete for some f.g.\ $G \in \mathcal{G}$ \cite{Sa20}, the locally finite groups in $\mathcal{G}$ are exactly the residually finite countable ones \cite{KiRo90}, $\mathcal{G}$ has a f.g.\ subgroup with no free subgroups which is not virtually solvable (i.e.\ the Tits alternative fails) \cite{Sa19a,Sa18a}, f.g.\ groups in $\mathcal{G}$ may have undecidable torsion problem \cite{Sa18a}, and there is a f.g.-universal f.g.\ group in $\mathcal{G}$ (a f.g.\ group which contains a copy of every f.g.\ group in $\mathcal{G}$) \cite{Sa18a}.

From these facts, many additional results follow, for instance the closure properties imply that $\mathcal{G}$ contains all finite groups (first proved in \cite{He69}), thus the free product of all finite groups (first proved in \cite{Al88}), thus $\Z_2*\Z_2$, thus $\Z$, thus all countable free groups (first proved in \cite{BoLiRu88}) and finitely-generated abelian groups. Graph groups and commensurability invariance imply that fundamental groups of $2$-manifolds are in $\mathcal{G}$ \cite{KiRo90}. It also follows that $\mathcal{G}$ is not quasi-isometry invariant since $F_2 \times F_2$ is a graph group but is quasi-isometric to a group that is not residually finite \cite{BuMo00}.

To our knowledge, residual finiteness and the complexity restriction of the word problem are the only known restrictions for f.g.\ groups to be in $\mathcal{G}$.

Our aim in this paper is to clarify the situation in the ``tame end''\footnote{``Dynamically'', already f.g.\ subgroups of $F_2 \times F_2$ can be very wild. For example, they can have an undecidable conjugacy problem \cite{Mi58,LySc15} and can have arbitrarily badly distorted subgroups \cite{OlYuSa01,SaBiRi02}. However, both finite graph products and wreath products preserve polynomial-time decidability of the word problem (for the former, this is clear from the normal form \cite{Gr90}), which is atypical for groups of cellular automata if P $\neq$ NP. Also, the Tits alternative is preserved by a large class of graph products \cite{AnMi15}.} of the complexity spectrum of $\mathcal{G}$, by clarifying the three items listed above. First, we combine the first two items, graph groups and closure under direct and free products, into a single closure property: 

\begin{theorem}
The class $\mathcal{G}$ is closed under countable graph products.
\end{theorem}

The finite case of the construction is similar to \cite{Sa18d}, mixing what was done in the direct and free product cases. Kim and Roush proved in \cite{KiRo90} the cases of finite graph products where the node groups are finite or infinite cyclic. In the general case of countable products, some technical difficulties arise which do not have analogs in the previous papers, since the individual generators can no longer understand the global picture. A reference on graph products is \cite{Gr90}.

We also generalize the third item, the lamplighter group.

\begin{theorem}
If $A$ is a finite abelian group and $G \in \mathcal{G}$ acts on a full shift by automorphisms without $A$-cancellation, then $A \wr G \in \mathcal{G}$.
\end{theorem}

Here, \emph{acting without $A$-cancellation} is a technical notion, which roughly means that the occurrences of some patterns, with some chosen ``weights'' in $A$, do not exhibit cancellation in $A$ along any sequence of translations. A reference on wreath products is \cite{BhMoMaNe06}. 

We complement this result with many examples of groups that act without $A$-cancellation by automorphisms, obtaining for example that the groups $A \wr \Z^d$ and $A \wr F_d$ are in $\mathcal{G}$ for any finite abelian group $A$ and $d \in \N$.

While lack of $A$-cancellation is precisely the notion needed (in our construction) for wreath products, in all our concrete examples we deduce it from the stronger property of \emph{unique visits} (see Definition~\ref{def:StronglyFaithful}), which may be of independent interest. 

Similar constructions work in the more restricted setting of one-sided automorphism groups, though (at least without modification) we obtain somewhat weaker statements. Let $\mathcal{G}_n'$ be the class of subgroups of $\Aut(\Sigma^\N)$ where $|\Sigma| = n$, and let $\mathcal{G}_\infty' = \bigcup_n \mathcal{G}_n'$.

It is easy to see that $\mathcal{G}_\infty' \subset \mathcal{G}$ (as we explain in the following section, $\Aut(A^\N)$ can be seen as a subgroup of $\Aut(A^\Z)$). It is known that $\mathcal{G}_n'$ \emph{does} depend on $n$; in fact every finite group is in $\mathcal{G}_\infty'$, but no $\mathcal{G}_n'$ contains every finite group. See \cite{BoFrKi90} for basic information on $\Aut(A^\N)$ and its subgroups.

We obtain the following results:

\begin{theorem}
Any finite graph product of groups in $\mathcal{G}_n'$ is in $\mathcal{G}_{n+1}'$.
\end{theorem}

\begin{theorem}
If $A$ is finite abelian and $G \in \mathcal{G}'_\infty$ has non-$A$-cancellation, then $A \wr G \in \mathcal{G}'_\infty$.
\end{theorem}

\begin{corollary}
If $A$ is finite abelian and $n \in \N$, then $A \wr F_n, A \wr \Z^n \in \mathcal{G}'_\infty$.
\end{corollary}

\section{Definitions and conventions}

We take $0 \in \N$, $\Z_+ = \N \setminus \{0\}$. Every finite set and every group has the discrete topology, and the function space $A^B$ has the compact-open topology, which is always also the product topology. Throughout, $\Sigma$ is a finite set with at least two elements, called the \emph{alphabet}; and $\Sigma^*$ is the set of words over this alphabet (which can be taken to be the free monoid generated by the alphabet). The elements of $\Sigma$ are called \emph{letters} or \emph{symbols}. We use the notation $A \Subset B$ for finite subsets ($A \subset B \wedge |A| < \infty$). By $G$ we denote a group, and $e = e_G \in G$ is always an identity element. Groups act from the left.

Words are $0$-indexed and for a word $u \in \Sigma^*$ write $u^\bot$ for its reverse $u^\bot_i = u_{|u|-1-i}$, and $|u|$ for its \emph{length} (the number of symbols it is composed of, e.g.\ $|aa| = 2$). Write the \emph{empty word} (the unique word of length $0$) as $\varepsilon$, write concatenation of words $u, v$ as $u \cdot v$ or simply $uv$, define $u^0 = \varepsilon$ and $u^{n+1} = uu^n$. For sets of words $A, B$, write $AB = \{uv \;|\; u \in A, v \in B\}$, $A^0 = \{\varepsilon\}$ and $A^{n+1} = AA^n$. Write $w^*$ for $\{w^n \;|\; n \in \N\}$ and $A^* = \bigcup_{n \in \N} A^n$. For a letter $a \in \Sigma$, we sometimes write $a \in w$ for $\exists j: w_j = a$.

A \emph{configuration} is a bi-infinite word $x \in \Sigma^\Z$ (generalized below). Write $u^\Z$ for the configuration $x$ with $x_i = u_j$ where $j \equiv i \bmod |u|$. For $u,v \in \Sigma^*$ write
\[ u \sqsubset v \iff \exists j \in \{0, \dots, |v|-|u|\}: \forall i \in \{0,\dots,|u|-1\}: v_{j+i} = u_i. \]

A set of words $W \subset \Sigma^*$ is \emph{mutually unbordered} if $\forall u, v \in W: uw = w'v \implies w = w' = \varepsilon \vee |w| \geq |v|$. The point of this definition is that whenever words from a mutually unbordered set of words appear in a configuration $x \in \Sigma^\Z$, there are no overlaps between their occurrences. The set $\Sigma$ itself is mutually unbordered, and for example the set of all words that begin with a particular letter and do not otherwise contain it (e.g.\ $2\{0,1\}^*$) is also mutually unbordered.

A \emph{(zero-dimensional discrete topological dynamical) $G$-system} is a pair $(G, X)$ where $G$ is a discrete group, $X$ is a zero-dimensional compact metrizable space, and $G$ acts on $X$ by continuous maps. Note that the action is of course part of the data, but is not included in the notation. On the other hand, specifying $G$ is superfluous when talking about $G$-systems, so we sometimes simply say $X$ is a $G$-system. (These somewhat strange conventions are standard.)

We denote the action of $g \in G$ on $x \in X$ by just $gx$. The elements $x \in X$ are called \emph{points}. A \emph{factor} of a system is another system $(G,Y)$ such that there is a continuous surjection $f : X \to Y$ which intertwines the actions as $f(gx) = gf(x)$ for all $x \in X, g \in G$, and an \emph{isomorphism} of dynamical systems is a factor map that has a two-sided factor map inverse. A \emph{subsystem} is a closed $G$-invariant subset.

A \emph{subshift} is a subsystem of the \emph{full shift} $A^G$ where $G$ acts by $gx_h = x_{hg}$. Points of subshifts are also called \emph{configurations}; note that in the case $G = \Z$ they correspond to configurations as defined above. Subshifts are, up to isomorphism, the systems where the action is \emph{expansive}, meaning there exists $\varepsilon > 0$ such that $x \neq y \implies \exists g \in G: d(gx, gy) > \varepsilon$. We use these meanings of ``subshift'' rather interchangeably. A system is \emph{faithful} if $(\forall x \in X: gx = x) \implies g = e$.

A subshift $X \subset \Sigma^G$ is an \emph{SFT} if there exists a clopen set $C \subset \Sigma^G$ such that $X = \bigcap_{g \in G} g C$. Clopen sets are sets that are defined by looking at only finitely many coordinates of $G$, and thus SFTs are sets defined by a finite local constraint which is checked uniformly on the group $G$ in every pattern of shape $Dg$ as $g$ ranges over $G$, for a finite subset $D \Subset G$. \emph{Sofic} shifts are subshifts which are factors of SFTs. Of course full shifts are sofic.

The \emph{(one-dimensional) full shift} is the $\Z$-full shift $\Sigma^\Z$, and for clarity we write the action of its generator $1 \in \Z$ (which determines the entire action) as $\sigma(x)_i = x_{i+1}$. Its shift-commuting continuous self-maps are known as \emph{endomorphisms} or \emph{cellular automata}, and the ones that are injective (equivalently, have a left and right inverse) are called \emph{automorphisms} or \emph{reversible cellular automata}. We only consider reversible cellular automata in this paper. Reversible cellular automata form a group denoted $\Aut(\Sigma^\Z)$.

\begin{definition}
By $\mathcal{G}$ we denote the set of isomorphism classes of subgroups of $\Aut(\Sigma^\Z)$.
\end{definition}

This definition does not depend on $\Sigma$ \cite{KiRo90} (as long as $|\Sigma| \geq 2$). We also call groups in $\mathcal{G}$ \emph{groups of cellular automata}. Sometimes we talk about the \emph{natural action} of a group $G \in \mathcal{G}$, this means we assume $G$ is represented in some way as a concrete group consisting of cellular automata on some full shift $A^\Z$ and the natural action is its defining action on $A^\Z$.

By the Curtis-Hedlund-Lyndon theorem \cite{He69}, a cellular automaton $f : \Sigma^\Z \to \Sigma^\Z$ has a \emph{local rule} $F : \Sigma^{2r+1} \to \Sigma$ such that $f(x)_i = F(x|_{\{i-r,\ldots,i+r\}})$ for all $x \in \Sigma^\Z$. The \emph{minimal radius} is the minimal possible $r$, and for this $r$ there obviously exist $u \in \Sigma^{2r}$ and $a, b \in \Sigma$ such that either $F(au) \neq F(bu)$ or $F(ua) \neq F(ub)$. We often identify $f$ with $F$, and more generally apply it to words of length $2r+k$ to produce words of length $k$, by applying it to the consecutive length $2r+1$ subwords.

A cellular automaton is \emph{one-sided} if we can pick $F : \Sigma^{r+1} \to \Sigma$ such that $f(x)_i = F(x|_{\{i,i+1,\ldots,i+r\}})$ for all $x \in \Sigma^\Z$ and $i \in \Z$. The shift-commuting endomorphisms of $\Sigma^\N$ under the $\N$-action by $\sigma(x)_i = x_{i+1}$ are in an obvious one-to-one correspondence with one-sided cellular automata on $\Sigma^\Z$, and the ones that are bijective (as self-maps of $\Sigma^\N$) again form a group $\Aut(\Sigma^\N)$. The group $\Aut(\Sigma^\N)$ can be identified with the subgroup of $\Aut(\Sigma^\Z)$ consisting of reversible cellular automata $f$ such that both $f$ and $f^{-1}$ are one-sided.

Usually, we define a cellular automaton by explaining what its action is on a dense set of configurations. Dense refers to the topological meaning, i.e.\ a dense set $Y \subset \Sigma^\Z$ is just a set such that for every $n\in\N$, every word of length $2n+1$ appears as $y|_{\{-n, \ldots, n\}}$ for some $y \in Y$.

Describing the rule on a dense set is easier than giving a local rule, because in the local rule we only see what happens in individual cells, while the behavior of nearby cells is usually highly coordinated and is really just a coding (application through a suitable conjugating map) of some natural action. We want to make sure that the cells agree on the interpretations, and the best way to do this is to only describe the rule directly in terms of such an interpretation. The ``dense'' assumption is mainly a matter of presentation: it allows us to assume that no behavior (like a repetition of symbols from some subset of the alphabet) continues infinitely, avoiding the need to discuss infinite tails separately.

On the other hand, when the rule is given by describing the action, it is not automatic that it is shift-commuting (but it is if we only discuss coordinates relatively). It is also not automatic that it extends to a continuous map, and thus we need to be sure that our maps defined on the dense set are in fact \emph{uniformly continuous}, i.e.\ we can give some bound on how far we need to look to determine the image at some cell. However, this should in practice be completely obvious in each case. 
On the third hand, when the rule is given by describing a natural bijective action directly, reversibity is trivial.


Conjugation in a group is $g^h = h^{-1}gh$. The \emph{free group} on $n$ free generators is $F_n = \langle g_1,g_2,\ldots,g_n \rangle$. The cyclic group with $n$ elements is $\Z_n$, usually written additively. Group elements are sometimes called \emph{cells}, especially when $i \in \Z$ and working with $\Z$-subshifts. The groups $G$ and $H$ are \emph{commensurable} if they share a finite index subgroup (up to isomorphism).

The \emph{(restricted) wreath product} of groups $A$ and $G$, which we may assume disjoint apart from the identity, is the group $A \wr G$ with the presentation
\[ \langle A, G \;|\; \forall g, h \in G: \forall a, b \in A: (g \neq h \implies [a^g, b^h] = e) \rangle \]
where it is understood that the relations of $G$ and $A$ also hold, and we recall that $e$ is the identity element.

Graph products are defined as follows: Let $\Gamma = (V, E)$ be a graph, i.e.\ $E \subset \{\{u, v\} \;|\; u, v \in V, u \neq v \}$. Losing no generality, we assume $V = \{1,2,\ldots,n\}$ or $V = \N$. Let $\overline{G} = (G_1, \ldots, G_n)$ or $\overline{G} = (G_i)_{i \in V}$ be groups, which we assume disjoint apart from sharing the identity element. Then we write $\overline{G}^\Gamma$ for the corresponding \emph{graph product}
\[ \overline{G}^\Gamma = \langle \bigcup_{i \in V} G_i \;|\; \forall \{i, j\} \in E: [G_i, G_j] \rangle, \]
where $[G_i, G_j] = \{[a,b] \;|\; a \in G_i, b \in G_j\}$, where it is understood that the relations of the groups $G_i$ also hold. We call the groups $G_i$ the \emph{node groups}.

Readers unfamiliar with graph products may wonder whether there can be ``unintended consequences'' of such relations beyond what it says on the tin (i.e.\ make the groups connected by edges commute). In a sense there are none, in that if a word over elements of different groups cannot be shortened in the obvious way by permuting commuting group elements so that two elements from the same group $G_i$ can be joined, then it must be nontrivial. This follows from the normal form given in \cite{Gr90}. We recover this result for groups in $\mathcal{G}$ in the proof of Theorem~\ref{thm:GraphProducts}.

\subsection{Conveyor belt construction}

We recall the basic conveyor belt construction, whose variants will be used throughout the paper (and which was first used under the term bucket-passing in \cite{KiRo90}). Consider the group $\Aut(\Sigma^\Z)$ for a finite alphabet $\Sigma$. Then since automorphisms commute with the shift map, $\Aut(\Sigma^\Z)$ acts naturally on the fixed points of $\sigma^n$, i.e.\ the set of $n$-periodic points.

In particular, there is a natural action on the set $W_n = (\Sigma^n)^2$ of pairs of words of length $n$, namely
\[ f(u, v) = (f((uv)^\Z)|_{\{0, \ldots, n-1\}}, f((uv)^\Z)|_{\{n, \ldots, 2n-1\}}). \]

Finally, we can write $(u, v) \in W_n$ as a single word $w \in (\Sigma^2)^n$ by defining $w_i = (u_i, v^{\bot}_i)$. Note that this is not the most obvious way of identifying $(\Sigma^n)^2$ and $(\Sigma^2)^n$ as $v$ is reversed. To specify that the identification is being made this way we call $w$ a \emph{conveyor belt} (corresponding to the periodic point $(uv)^\Z$).

This terminology comes from a particular geometric picture. Indeed, the left shift action $\sigma$ on the orbit of $(uv)^\Z$ conjugates in this map to a counterclockwise ``rotation'' of $w_i$, where the \emph{top track} containing $u$ moves left, the \emph{bottom track} containing $v^\bot$ moves right, and at the boundaries symbols move from bottom up or top down. Thus $w$ acts as a conveyor belt in this conjugated action.

Now it easy to verify that the natural action of $\Aut(\Sigma^\Z)$ on $W_n$ conjugates, on the conveyor belts, to an action given by local rules (like those of cellular automata, but aware of the boundaries). We call this the \emph{conveyor belt action}. To determine the symbol $k$ steps to the left from a position on the top track, we move to the left along the top track, until we reach the left end, move to the bottom track and start traveling right. With this logic (adapted in an obvious way for the bottom track, and for accessing right neighbors) we can determine the neighborhood as it would be seen through the conjugacy with periodic points.

This gives, for any finite set $\Gamma$, a natural embedding of $\Aut(\Sigma^\Z)$ into $\Aut((\Sigma^2 \cup \Gamma)^\Z)$. Namely, on the dense set of configurations where all maximal sequences of symbols from $\Sigma^2$ are finite, we apply the conveyor belt action. This gives a shift-commuting rule (since we just shift the finite conveyor belts and apply the same actions!) which is uniformly continuous (because the local rules described in the previous paragraph have the same radius as the original maps when we know that all conveyor belts are finite). Therefore, there is a unique extension to all of $(\Sigma^2 \cup \Gamma)^\Z$, with all of $\Gamma^\Z$ consisting of fixed points.

We will sometimes call symbols from $\Sigma^2$ \emph{double letters} to avoid confusion with symbols from $\Sigma$, and similarly a word over the alphabet $\Sigma^2$ is sometimes called a \emph{double word}. 

\section{Closure under graph products}

\begin{theorem}
\label{thm:GraphProducts}
The class $\mathcal{G}$ is closed under countable graph products.
\end{theorem}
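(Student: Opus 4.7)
The plan is to combine, in a graph-dependent way, the track-based direct product construction (commuting subgroups act on independent tracks) with the marker-based free product construction of \cite{Sa16a} (non-commuting subgroups act on marker-separated zones). First I would reduce the countable case to the finite one: every element of $\overline{G}^\Gamma$ involves only finitely many node groups, so $\overline{G}^\Gamma$ is the directed union of its finite-induced-subgraph subproducts. Provided the finite-case constructions can be assembled compatibly as vertices are added (e.g.\ by extending tracks on a fixed full shift, using the alphabet-independence of $\mathcal{G}$ from \cite{KiRo90}), the countable case follows, much as in the analogous reduction for countable free products.

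For a finite graph $\Gamma = (V,E)$ and embeddings $G_i \hookrightarrow \Aut(\Sigma_i^\Z)$, I would introduce an alphabet $\Sigma$ consisting of a marker symbol together with ``zone-content'' symbols labeled by cliques $C$ of $\Gamma$, each zone-content symbol carrying an independent $\Sigma_i$-track for every $i \in C$. Configurations parse into marker-separated zones. An element $g \in G_i$ then acts (a) track-wise, by applying the local rule of $g$ to the $i$-track of each zone whose clique label contains $i$; and (b) in a content-dependent way on the boundary markers, so that zones of non-commuting types can shift past, merge with, or split from their neighbors — creating genuine non-commutation between $G_i$ and $G_j$ when $\{i,j\} \notin E$. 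Edges $\{i,j\} \in E$ are accommodated because $\{i,j\}$ is then a clique: zones carrying both labels exist, and within them $G_i$ and $G_j$ act on disjoint tracks and so commute, while on zones containing only one of them the marker interactions can be arranged to commute too. Verification splits into (1) checking that the graph-product relations hold, and (2) showing injectivity of $\overline{G}^\Gamma \to \Aut(\Sigma^\Z)$ via Green's normal form: for any reduced word $a_1 \cdots a_k$, build a configuration whose zones are aligned with the letters $a_j$ and on which the word acts nontrivially and distinguishably from every other reduced word.

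The main obstacle, as the introduction hints, is locality: each generator is a cellular automaton with a fixed finite window and cannot see the global clique-zone structure. The construction must design local rules that nevertheless (i) identify the correct track to act on, (ii) update markers consistently across overlapping windows of different generators, and (iii) produce \emph{precisely} the graph-product relations — not the direct-product relations, which would arise if marker interactions fail to create enough non-commutation, nor the free-product relations, which would arise if non-commutation leaks into edges. Calibrating this balance uniformly across all cliques of $\Gamma$, so that local legibility is preserved and no extra relations appear, is the novel technical content beyond the pure free product case of \cite{Sa16a} and the finite/cyclic node case of \cite{KiRo90}.
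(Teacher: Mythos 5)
There is a genuine gap in your reduction of the countable case to the finite case. The class $\mathcal{G}$ is \emph{not} closed under countable increasing unions: every countable locally finite group is a directed union of finite groups, all of which lie in $\mathcal{G}$, yet only the residually finite ones are in $\mathcal{G}$. So ``$\overline{G}^\Gamma$ is the directed union of its finite-induced-subgraph subproducts'' buys you nothing by itself; the clause ``provided the finite-case constructions can be assembled compatibly'' is precisely where all the difficulty lives, and you do not supply the mechanism. The paper's solution is a single construction over a \emph{fixed} finite alphabet in which the type of a segment is encoded by mutually unbordered words $W_i \subset \Sigma^i$ of length growing with $i$, so that the generator for $G_i$ only ever needs to recognize the finitely many codes $W_k$ with $k \leq i$; for a general (not locally cofinite) graph one additionally needs an asymmetric rule, based on the ordering of the indices, deciding which neighboring segment absorbs the boundary symbols of a separator. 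Without some such device, local legibility fails and the directed-union picture does not assemble into a single action.

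In the finite case your architecture also diverges from what is needed in a way that creates problems rather than solving them. Zones that ``shift past, merge with, or split from their neighbors'' are very hard to implement reversibly and tend to introduce unwanted relations; the paper keeps all segment boundaries completely static. Each segment carries a single type $i$ (no clique-labelled multi-track zones are needed): $\hat g$ for $g \in G_i$ applies $g$ via a conveyor-belt coding inside type-$i$ segments, acts as the identity on type-$j$ segments when $\{i,j\}$ is an edge (so commutation for edges is trivial, not something to be ``arranged''), and for non-edges perturbs only the rightmost symbol of an adjacent type-$j$ segment by a correction term $s(d')-s(d)$ read off from a function-valued separator symbol $s$ and the change $d \mapsto d'$ in the leftmost symbol of the type-$i$ segment; these corrections telescope to zero, which is what makes $\hat g$ a well-defined automorphism. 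Finally, the injectivity step you defer to ``Green's normal form'' is the actual content of the proof: one shows by induction on the syllables of a reduced word that a perturbation of the rightmost symbol of a suitably built finite word propagates leftward one segment per syllable and changes the leftmost symbol, which simultaneously proves faithfulness and (as the paper notes) reproves the normal form theorem rather than relying on it. As written, your proposal names the obstacles correctly but does not overcome them.
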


\begin{proof}
Consider first the case of a finite graph product $(G_1, G_2, \dots, G_n)^\Gamma$. We may assume each group $G_i$ acts faithfully by cellular automata on $\{0,1\}^\Z$, and that the groups $G_i$ are disjoint apart from sharing the identity. We may also assume that each $g \in G_i$ has minimal radius at least $1$, i.e.\ the symbol flip $0 \leftrightarrow 1$ is not in any of these groups. This is possible by using the non-trivial self-embeddings of $\mathcal{G}$ constructed in \cite{KiRo90}. 

Let $B = \{0,1\}^2$, $\SSS = B^B$, i.e.\ the set of functions from $B$ to $B$, and pick the alphabet $\Sigma = (B \times \{1,\dots,n\}) \cup \SSS$. On $B$ we pick some abelian group structure, for example through the natural identification $B \cong \Z_2^2$.

We can see a word over the alphabet $B \times \{1,\dots,n\}$ as a triple $(u,v,w)$ where $u, v \in \{0,1\}^*$ and $w \in \{1,\dots,n\}^*$ and $|u| = |v| = |w|$ in an obvious way. If $g \in G_i$, then define a map $\hat g \in \Aut(\Sigma^\Z)$ as follows: On a dense set, a configuration in $\Sigma^\Z$ splits into maximal finite subwords of the form $(u, v, i^m)$ (called \emph{segments}), with $u, v \in \{0,1\}^m$, $i \in \{1,\dots,n\}$, and into symbols in $\SSS$.

We define the action on these words in such a way that this extends to an automorphism, and for this we use the conveyor belt construction: words of the form $(u, v, i^m)$ are mapped to $(y_{[0,m-1]}, y_{[m,2m-1]}, i^m)$ where $y = g((uv^\bot)^\Z)$. Symbols in $\SSS$ are never modified, and the $\{1,\ldots,n\}$-components of states in $B \times \{1,\ldots,n\}$ are also never modified.

If $i \neq j$, then $\hat g$ acts on maximal words $(ua, vb, j^m)$ as follows, for $a, b \in \{0,1\}$: The words $u$ and $v$ are not modified. If there is an edge between $i$ and $j$ in $\Gamma$, $\hat g$ acts as identity. Suppose then there is no edge (so that the groups $G_i, G_j$ should \emph{not} commute). If the symbols immediately to the right of the segment of the tape containing $(ua, vb, j^m)$ are $sc$, then nothing is done unless $s \in \SSS$ and $c \in B \times \{i\}$. Finally, if $s \in \SSS$ and $c = (d,i) \in B \times \{i\}$, and if the symbol $(d,i)$ is changed to $(d',i)$ when $\hat g$ is applied in the segment on the right, then we change $(a, b, j)$ to $((a, b) + s(d') - s(d), j)$ (using the group structure of $B$). This is called a \emph{side-effect} of the application.

It is seen as in the previous section 
that this gives a well-defined automorphism. Briefly, considering one of the individual groups $G_i$, the action on the segments over the subalphabet $B \times \{i\}$ mimics the periodic point action (meaning the action of $\Aut(\Sigma^\Z)$ on the set of points with a finite shift-orbit, in this case of even length), and the side-effects on the rightmost positions of words of the form $B \times \{j\}$ telescope to zero when the leftmost symbol of the neighboring $B \times \{i\}$ segment returns to its original value. Thus $\hat g^{-1} = \widehat{g^{-1}}$, and $g \mapsto \hat g$ gives an embedding of $G_i$. In particular we obtain an action of the free product of these groups.

We can extend the action of $\hat g$ for $g \in \bigcup_i G_i$ in a natural way to finite words over $\Sigma$, by the same description above (so that this agrees with the action on configurations for example if we think of the finite word as being surrounded by tails entirely over $\SSS$). The maximal words of the form $(u, v, i^m)$ which such a word splits into are again called its \emph{segments}, and $i$ is the \emph{type} of the segment. As is standard, we write elements of $(G_1,\dots,G_n)^\Gamma$ as words over the alphabet $\bigcup_i G_i \setminus \{e\}$, and such a word is \emph{reduced} if it cannot be made shorter by permuting commuting elements, joining subwords of the form $G_i G_i$ and removing occurrences of $e$. For a particular reduced word $w$ we call the individual symbols in $G_i$ its \emph{syllables}, and we can associate to a reduced word $w \in (\bigcup_i G_i \setminus \{e\})^*$ a word $\tau(w) \in \{1,2,\dots,n\}^*$ by only recording the types of syllables, where the type of $g \in G_i$ is $i$.

It is easy to check that if $g \in G_i, h \in G_j$, and $\{i,j\} \in E(\Gamma)$, then the cellular automata $\hat g$ and $\hat h$ defined above commute, as their actions never modify or depend on the same part of the tape. Thus $g \mapsto \hat g$ extends in a well-defined way to the graph product by the definition of the graph product (this can also be stated as the universal property of the graph product). We now consider an arbitrary reduced word $w$ of length at least one, and show that the corresponding automorphism is nontrivial, which proves that the action is faithful.

More precisely, we will prove the following. Let $w \in (\bigcup_i G_i \setminus \{e\})^*$ be any nonempty reduced word representing an element $g \in (G_1,\dots,G_n)^\Gamma$. Suppose either $\tau(w) = jv$, $j \in \{1,\dots,n\}$ where $\{\{j, k\} \;|\; k \in v\} \subset E(\Gamma)$, or we have $\tau(w) = jviu$ where $\{j, i\} \notin E(\Gamma)$ and $\{\{j, k\} \;|\; k \in v\} \subset E(\Gamma)$ (these cases of course cover all nonempty reduced words -- we are simply giving names to syllables and sequences of syllables). Then there is a word $t \in \Sigma^*$ whose leftmost segment is of type~$j$, such that some perturbation of the rightmost symbol of $t$ effects some change in the leftmost symbol of $\hat g(t)$. Furthermore, this change only happens at the ``last step'', when applying the syllables of $w$ one by one from right to left (i.e.\ the change only becomes visible when applying the group element of type $j$ in the beginning of $w$).

In formulas, we want that there exists another word $t' \in \Sigma^*$ with $|t'| = |t|$ whose leftmost segment is also of type~$j$, such that $t_{[0,|t|-2]} = t'_{[0,|t'|-2]}$, $\hat g(t)_0 \neq \hat g(t')_0$, and $\hat h(t)_0 = \hat h(t')_0$ if $h$ is a group element corresponding to a proper suffix of $w$.

The base case $\tau(w) = jv$ is obvious by the definition of the minimal radius $r$ of a cellular automaton (where recall that we assumed $r \geq 1$ for all the cellular automata in all the groups), by taking a word with just one suitably chosen segment of type~$j$ whose length is $r+1$. Namely, $j$ cannot appear in $v$ because every symbol in $v$ commutes with $j$ and $w$ is reduced.

The case $jviu$ is proved by induction. Write $w = w'w''$ with $\tau(w') = jv, \tau(w'') = iu$, with the assumptions above, and let $g', g''$ be the group elements corresponding to $w', w''$. (When considering the corresponding automorphisms, we'll write $\hat g'$ instead of $\widehat{g'}$ in what follows, for typographical reasons.) The subword $jv$ corresponds to the base case, and we can find words $t, t'$ of the same length which only differ in the last symbol, and which consist of a single type-$j$ segment, such that the leftmost symbols of $\hat g'(t)$ and $\hat g'(t')$ differ, and furthermore the first difference is seen at the last step. Suppose $t_{|t|-1} = (a,j)$ and $t'_{|t'|-1} = (b,j)$, where $a \in B^2, b \in B^2$.

Next, since $w'' = iu$ is reduced, by induction we can find words $t'', t'''$ which only differ in the rightmost symbol, whose leftmost segment is of type~$i$, such that $\hat g''(t'')_0 = (d,i) \neq (d',i) = \hat g''(t''')_0$, for some distinct $d \in B^2, d' \in B^2$ and the change only happens at the last step (in the sense explained above). Now recall that if we put an element $s \in \SSS$ between the word $t$ and a word $\hat t$ that begins with a segment of type $i$, then (because $\{j, i\} \notin E$), if a cellular automaton corresponding to an element of $G_i$ changes the initial symbol $d$ of $\hat t$ to $d'$, then we will as a side effect change the last symbol $(a,j)$ of $t$ to $(a + s(d') - s(d), j)$. Picking any $s \in \SSS$ such that $s(d') - s(d) = b - a$, we have $(a + s(d') - s(d), j) = (b, j)$, so that such a change turns $t$ into $t'$.

Let $T = (B^2 \times \{j\})^{|t|}$. Let $\hat t \in T$, and observe that
\[ \hat g''(\hat t \cdot s \cdot t'') = \phi(\hat t) \cdot s \cdot \hat g''(t'') \]
for some function $\phi : T \to T$. The length-$|t''|$ suffix is indeed independent of $\hat t$ since information never flows to the right over a symbol from $\SSS$, and from this we conclude that $\phi$ must be a bijection because $\hat g''$ is a bijection on words of a fixed length. Since $t \in T$ and $\phi$ is a bijection, there exists $\hat t \in T$ such that $\phi(\hat t) = t$. For this choice we have
\[ \hat g''(\hat t \cdot s \cdot t'') = t \cdot s \cdot \hat g''(t''). \]

We now claim that
\[ \hat g''(\hat t \cdot s \cdot t''') = t' \cdot s \cdot \hat g''(t'''). \]
For this, observe that after an application of $h$, a group element corresponding to a suffix of $w''$, the prefix $\hat t$ must evolve the same way, i.e.\ $\hat h(\hat t \cdot s \cdot t'')|_{\{0,\ldots,|\hat t|-1\}} = \hat h(\hat t \cdot s \cdot t'')|_{\{0,\ldots,|\hat t|-1\}}$. This is because the first symbols of the length-$|t''|$ suffixes are the same by the inductive assumption.

The last automorphism $h$ applied when applying $\hat g''$ (which is the leftmost syllable of $w''$), is then the first time this leftmost symbol changes. This $h$ is of type $i$, since $\tau(w'') = iu$. Since $\hat t$ is a segment of type $j$, the application of $h$ can only change the last symbol of this segment, and since
$\hat g''(t'')_0 = (d, i)$ and $\hat g''(t''')_0 = (d', i)$, the change in the last symbol changes $t$ to $t'$ by our choice of $s$. This concludes the proof that $\hat g''(\hat t \cdot s \cdot t''')|_{\{0, \ldots, |t'|-1\}} = t'$.

Then consider the applications 
\[ \hat g' \hat g''(\hat t \cdot s \cdot t'') = \hat g'(t \cdot s \cdot \hat g''(t'')) \]
and
\[ \hat g' \hat g''(\hat t \cdot s \cdot t''') = \hat g'(t' \cdot s \cdot \hat g''(t''')). \]
Here, $\hat g'$ corresponds to syllables of types $jv$. Since the syllables corresponding to $v$ commute with $i$, the $|t|$-prefixes $t$ and $t'$ of are not modified before the last step (i.e.\ they are not changed by the application of $\hat h$ where $h$ corresponds to a suffix of $\hat g'$). On the last step, the initial segments evolve to $\hat g'(t)$ and $\hat g'(t')$ respectively, and the initial symbols are different by the assumption on these words. This concludes the proof of the induction step, thus the proof of the technical claim, and thus the proof of the case of a graph product coming from a finite graph $\Gamma$.

\vspace{0.2cm}

\textbf{Infinite locally cofinite $\Gamma$.} Consider now the case of an infinite locally cofinite graph $\Gamma$, i.e.\ every node is neighbors with all but finitely many nodes, i.e.\ almost any two groups commute. Pick any nontrivial alphabet $\Sigma$ and for each $i$ pick a set of words $W_i \subset \Sigma^i$ with $|W_i| = 4$, such that $\bigcup_i W_i$ is mutually unbordered. Replace the use of $B \times \{i\}$ with $W_i$, i.e.\ a segment of type~$i$ is redefined to be a maximal finite word from $W_i^*$, which will again be interpreted as a conveyor belt built from two binary words. We can use the same $\SSS = B^B$ (assume $\SSS \cap \Sigma = \emptyset$), and thus we use the alphabet $\Sigma \cup \SSS$.

For $g \in G_i$, the automorphism $\hat g$ applies the natural conveyor belt action of $g$ in segments of type~$i$ (through decoding the words in $W_i$ to elements of $B$). If there is an element $s \in \SSS$ to the left of the segment, and a segment of type~$j$ immediately to the left of $s$, where $j \neq i, \{i,j\} \notin E$, then additionally permute the rightmost $W_j$-word of the segment of type~$j$, as we did in the finite case. Since the graph is locally cofinite, the function is continuous, and thus we obtain an automorphism action. The proof that this gives a faithful action of the graph product is analogous to the finite case.

\vspace{0.2cm}

\textbf{Arbitrary $\Gamma$.} Now, consider an arbitrary countable graph $\Gamma$. In this case, we modify the previous construction further: We set $\SSS = B \times B^B \times B$. The set of positions where letters from $\SSS$ appear will stay fixed in the construction (as they did before). After an element $\hat g$ is applied (for any $g \in G_i$), if the rightmost $B$-component of $s \in \SSS$ is changed from $d$ to $d'$, as a side-effect we will always change $(a, b)$ in the leftmost $B$-component to $(a, b) + s'(d') - s'(d)$, where $s'$ is the $B^B$-component of $s$. (This description does not make sense if $\hat g$ modifies both the leftmost and rightmost $B$-component, but we make sure this never happens.)

The idea is now that instead of segments of type $i$ actually knowing whether they should interact with a segment $j$, we will have a logic for whether we include the $B$-symbol of an $\SSS$ in the present segment, so that interaction between a type-$i$ segment and a type-$j$ segment only happens when \emph{both} include the $B$-symbol. The segment of the ``larger type'' will prevent the interaction if the groups are supposed to commute, by not including the $B$-symbol in its conveyor belt, thus every segment only has to know about segments smaller than it, which leads to a finite radius for the rule.

If $s \in \SSS$ has a segment of type~$i$ to the left of it, and a segment of type~$j$ to the right of it, then we ``include'' the rightmost $B$-component of $s$ to the segment on the right (i.e.\ we think of it as joined in the beginning of the conveyor belt coded by the $W_j^*$-word) if
\[ i \neq j \wedge (i > j \vee \{i,j\} \notin E). \]
We include the leftmost $B$-component of $s$ to the segment on the left if
\[ i \neq j \wedge (j > i \vee \{i,j\} \notin E). \]
When $s$ has a segment on only one side, the $B$-symbol on the side of the segment is always included in the segment.

The idea of this is that ``by default'', the $B$-symbols in a symbol $s$ are joined to the conveyor belts on each side. However, under some circumstances, we do not include them. The side-effects of applying elements of type $G_j$ on segments of type $i$ will of course only be visible if we include the symbols on both sides, and this is why it does not matter that a single $i$-type will have infinite many interactions -- when interacting with groups $G_j$ for $j > i$, the $B$-symbol will be included by default on the side of the type-$i$ segment, and the rule for $j$ on the other side will exclude it if necessary.

Now again for $g \in G_i$, the map $\hat g$ simply applies $g$ in segments of type~$i$ (with the rule described above determining which $B$-symbols are part of the conveyor belt), and the $S$-symbols themselves take care of the side-effects as described above. To see that this is an automorphism, observe that a local rule can tell whether $B$-components of possible neighboring $\SSS$-symbols belong to the segment since when $g \in G_i$, this requires only knowing the sets $W_k$ up to $k \leq i$.

Specifically, the only case when the leftmost $B$-symbol of an $\SSS$-symbol is \emph{not} included in a type-$i$ segment is that there is a segment of some type $j$ to the right, and either $i = j$, or $j < i$ and $\{i, j\} \in E$, and similar logic applies for inclusion on the left side.

Note that as required for the side-effect logic to make sense, the rule never modifies both $B$-components of a symbol $s \in \SSS$, because that would mean the segments on both sides are of type~$i$, and in this case neither $B$-component would be included in the segments on its side.

The proof from the finite (and cofinite) case now goes through with minor modifications, although the inclusion of the bordermost $B$-symbols in the $\SSS$-symbol effectively puts a lower bound on the length of some of the segments in the proof of faithfulness of the action. It suffices to make the assumption that every nontrivial $g \in G_i$ acts on $B^\Z$ with minimal radius at least $10$, say; this is again possible because by \cite{KiRo90} there exists a self-embedding of $\Aut(\Sigma^\Z)$ such that all nontrivial automorphisms in the image have minimal radius at least $10$.

By the assumption on the minimal radius of the maps $g \in G_i$, this gives a correct embedding of the graph product with an analogous proof as in the above cases.
\end{proof}

As noted in the introduction, the above proof does not ``use'' the normal form theorem \cite{Gr90} for graph products (though of course this normal form theorem is why we knew we can make reduced words act non-trivially), so as a side-effect we obtain a proof of the usual normal form theorem in the case where node groups act by automorphisms on a full shift (which admittedly is more complicated than the usual one).

Sometimes it is convenient to have the groups $G_i$ act on some full shift $\Sigma^\Z$ other than $\{0,1\}^\Z$, and it is clear that one can modify the construction to use $\Sigma^2$ instead of $B = \{0,1\}^2$. The set $B^B$ is simply replaced with $({\Sigma^2})^{\Sigma^2}$. We use this in the proof of Lemma~\ref{lem:FreeProduct}. In the case of finite graph products, one can even use a different alphabet in each segment (replacing $B^B$ by the set of functions from one square alphabet to another). 

Besides graph products, another generalization of the free product is the free product with amalgamation. Here, there is also a simple normal form, and in fact using the same ``segments of different types'' construction it seems plausible that one can prove at least some restricted closure results for this operation.

\begin{question}
If $G, H \in \mathcal{G}$, when is $G *_K H \in \mathcal{G}$?
\end{question}

If we take all the node groups to be $\Z_2$ (resp.\ $\Z$), but instead of commutation relations we add relations of the form $(st)^m = e$, we obtain the family of Coxeter (resp.\ Artin) groups. Indeed graph groups are also called \emph{right-angled Artin groups} (and there is also a notion of \emph{right-angled Coxeter group}, which is a graph product of $\Z_2$s).

\begin{question}
Which finitely-generated Coxeter (resp.\ Artin) groups are in $\mathcal{G}$?
\end{question}

By a Theorem of Tits, all f.g.\ Coxeter groups are linear, and thus residually finite \cite{CoLoRe98,Br89}. They are automatic \cite{BrHo93} so their word problem is decidable in polynomial time. These facts show that we cannot show (with present tools) that these groups do not all belong to $\mathcal{G}$. A general reference for these groups is \cite{Da12}.

\section{$A$-cancellation and unique visits}

In this section, we define various technical properties of actions of a subgroup of an automorphism group of a full shift, which try to formalize the idea of an action by cellular automata where the nontriviality of any nontrivial element ``can be seen in a single cell''.

These will be used in the following section to attack wreath products. Specifically, we define actions without $A$-cancellation and actions with unique visits (which are dynamical concepts of possible general interest), and property~P (which seems to be quite specific to our ``automorphisms on full shifts'' setting).

Property~P implies unique visits (at least up to passing to a diagonal action), which in turn implies non-$A$-cancellation. Not having $A$-cancellation (and thus either of the other properties) is sufficient for embeddability of wreath products $A \wr G$. On the other hand, we show that unique visits and property~P are relatively robust concepts with good closure properties, and we exhibit examples of groups acting with these properties.


\begin{definition}
\label{def:Aithful}
Let $X$ be a $G$-system and let $A$ be a finite abelian group. If $B$ is a finite abelian group and $\theta : X \to \Hom(A,B)$ is continuous, then $(G, X)$ has \emph{$A$-cancellation with respect to $(\theta, B)$} if there exists a finite-support map $f : G \to A$ such that
\[ (\forall x \in X: \sum_{g \in G} \theta(gx)(f(g)) = 0) \implies f = 0. \]
We say $(G, X)$ \emph{has $A$-cancellation} if it has $A$-cancellation with respect to every $(\theta, B)$.
\end{definition}


Here we consider $\Hom(A,B)$ with the discrete topology (as it is a finite set). While the function notation is convenient, a continuous function from a zero-dimensional compact metric space to a finite discrete set $S$ just means a finite clopen partition of the space where the partition elements are indexed by $S$. Of course the definition could be applied in more general contexts, but we have not explored this, and $A$ will always be finite abelian for us.

Algebraically, when $A$ is abelian (and written additively) maps $f : G \to A$ with finite support form an abelian group under elementwise sum. Analogously to group rings, we write this group as $A[G]$ (but there is typically no product) and write elements as (essentially) finite sums $\sum_{g \in G} f(g) \cdot g$. We can define a map $\phi = \phi_{\theta,B} : A[G] \to B^X$ by $f \mapsto (x \mapsto \sum_g \theta(gx)(f(g)))$, and this is clearly a homomorphism (for any choice of $\theta$). Having no $A$-cancellation means that this homomorphism is injective for some choice of $B$ and $\theta$.

\begin{lemma}
\label{lem:BAd}
We can always pick $B = A^d$ for some $d$ in the definition of $A$-cancellation.
\end{lemma}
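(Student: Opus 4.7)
The plan is to take a given witness $\theta : X \to \Hom(A, B)$ for $A$ithfulness and just keep track of which fibre of $\theta$ we land in. Since $A$ and $B$ are finite, $\Hom(A,B)$ is finite and so is the image $\theta(X)$; enumerate $\theta(X) = \{h_1, \ldots, h_d\}$ and let $X_i = \theta^{-1}(h_i)$. These are clopen (as $\theta$ is continuous into a finite discrete set), so they form a finite clopen partition of $X$.

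Next I would define $\theta' : X \to \Hom(A, A^d)$ by $\theta'(x) = \iota_{i(x)}$, where $i(x)$ is the unique index with $x \in X_{i(x)}$, and $\iota_i : A \to A^d$ is the $i$-th coordinate embedding $a \mapsto (0,\ldots,0,a,0,\ldots,0)$. This $\theta'$ is constant on each clopen piece $X_i$, hence continuous, and each value lies in $\Hom(A, A^d)$ by construction.

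To check $A$ithfulness for $\theta'$, suppose $f : G \to A$ has finite support and $\sum_{g \in G} \theta'(gx)(f(g)) = 0$ for every $x \in X$. Reading off the $i$-th coordinate of $A^d$, this says
\[ \sum_{\substack{g \in G \\ gx \in X_i}} f(g) = 0 \qquad \text{for every } x \in X \text{ and every } i \in \{1, \ldots, d\}. \]
Substituting back into the original sum yields
\[ \sum_{g \in G} \theta(gx)(f(g)) \;=\; \sum_{i=1}^{d} h_i\!\left( \sum_{\substack{g \in G \\ gx \in X_i}} f(g) \right) \;=\; 0 \]
for every $x$, and the original $A$ithfulness of $\theta$ now forces $f = 0$.

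There is essentially no obstacle: the substantive observation is that $\theta$ only sees $x$ through the partition piece that contains it, so replacing the homomorphisms $h_i$ by coordinate embeddings into $A^d$ discards no information. The only thing to double-check is the direction of implication, but the new hypothesis is strictly stronger than the old one (it demands that each fibre sum vanishes separately rather than just a fixed $\Z$-linear combination of them through the $h_i$), so the required implication is inherited for free.
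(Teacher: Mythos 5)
Your proof is correct and is essentially the paper's own argument: index the coordinates of $A^d$ by a clopen partition determined by $\theta$, replace each value of $\theta$ by the corresponding coordinate embedding, and observe that the original sum $\sum_g \theta(gx)(f(g))$ is recovered by applying the homomorphism $(a_1,\dots,a_d)\mapsto\sum_i h_i(a_i)$ to the new one, so injectivity is inherited. The paper phrases this as a pointwise factorization $\theta = \theta''\circ\theta'$, but the content is identical.
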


\begin{proof}
The map $\theta$ is determined by some finite clopen partition $P_1 \sqcup P_2 \sqcup \cdots \sqcup P_d$ of $X$. We can factor $\theta$ as the pointwise composition $x \mapsto \theta''(x) \circ \theta'(x)$, where $\theta'|_{P_i} : P_i \to \Hom(A, A^d)$ is the constant map with image
\begin{equation} \label{eq:ith} a \mapsto (0,0,...,0,\underset{i\mathrm{th\;position}}{a},0,...0) \end{equation}
and $\theta'' : X \to \Hom(A^d, B)$ is the constant map with image the homomorphism $\alpha : (0,0,...,0,a,0,...0) \mapsto \theta(x)(a)$ for any $x \in P_i$, where $a$ appears in the $i$th position. Since $\theta''$ is a constant map and $\alpha$ is a homomorphism, a short calculation shows
\[ \phi_{\theta,B}(f)(x) = \alpha(\phi_{\theta',A^d}(f)(x)), \]
and since we are aiming for injectivity, it is safe to drop $\theta''$ and replace $\theta$ with $\theta'$.
\end{proof}

The proof shows more: we can pick the homomorphism to be the one described in~\eqref{eq:ith}, with respect to some finite clopen partition.

\begin{lemma}
Let $X$ be a $G$-system and let $A$ be a finite nontrivial abelian group. If the action does not have $A$-cancellation then there is a faithful subshift factor (in particular, not having $A$-cancellation implies faithfulness).
\end{lemma}

Here, recall that group actions on compact metrizable zero-dimensional spaces are inverse limits of expansive ones, namely they are inverse limits of their subshift factors obtained by recording only the current partition element visited along an orbit, with respect to a finite partition.

A faithful subshift factor implies faithfulness, but not vice versa. For example the natural action of a residually finite group on its profinite completion is faithful but its subshift factors are finite.

\begin{proof}
We prove the contrapositive, so suppose no subshift factor is faithful. Let $B$ be an abelian group and let $\theta : X \to \Hom(A,B)$ be continuous. Then $\theta$ factors through a subshift $Y$ as $\theta = \theta'' \circ \theta'$ with $\theta' : X \to Y$, $\theta'' : Y \to \Hom(A, B)$. Since $(G, Y)$ is not faithful, there exists $k \in G$ such that $ky = y$ for all $y \in Y$. Let $f = -a \cdot k + a \cdot e_G$. We have
\begin{align*}
\sum_{g \in G} \theta(gx)(f(g)) &= \sum_{g \in G} \theta''(\theta'(gx))(f(g)) \\
&= \theta''(\theta'(kx))(f(k)) + \theta''(\theta'(x))(f(e)) \\
&= \theta''(\theta'(x))(-a) + \theta''(\theta'(x))(a) = 0,
\end{align*}
where $\theta'(kx) = k\theta'(x) = \theta'(x)$ by the assumption on $k$. Thus, the map $\phi_{\theta, B} : A[G] \to B^X$ is not injective for any $(\theta, B)$, showing that the system has $A$-cancellation.
\end{proof}

\begin{lemma}
Let $G = \Z$, let $X$ be a $G$-system, and let $A$ be a finite nontrivial abelian group. If $(G,X)$ has a faithful subshift factor then it is does not have $A$-cancellation.
\end{lemma}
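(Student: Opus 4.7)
The plan is to pull $A$ithfulness back through the factor map: given a factor $\pi : X \to Y$ with $Y$ a faithful subshift, any $\theta_Y : Y \to \Hom(A, B)$ witnessing $A$ithfulness of $Y$ yields $\theta_X := \theta_Y \circ \pi$ doing the same on $X$, since $\phi_{\theta_X}(f)(x) = \phi_{\theta_Y}(f)(\pi(x))$ and $\pi$ is surjective. So I may assume $X = Y$ is itself a faithful subshift of some $\Sigma^\Z$. On $Y$ I use the \emph{cylinder partition}: $\theta : Y \to \Hom(A, A^\Sigma)$, $\theta(y)(a) = a \cdot e_{y_0}$, where $(e_s)_{s \in \Sigma}$ is the standard basis of $A^\Sigma$. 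Then $\phi_\theta(f)(y) = \sum_i a_i e_{y_i}$, whose $s$-coordinate is $\sum_{i : y_i = s} a_i$.

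I prove $\phi_\theta$ is injective by contradiction: suppose $\phi_\theta(f) = 0$ for some nonzero $f = \sum a_i \cdot e_i$. The singleton-support case is immediate, so after shifting $f$ I may assume $\mathrm{supp}(f) \subseteq [0, N]$ with $N \geq 1$ and $a_0, a_N \neq 0$. For each $s \in \Sigma$ the indicator $\tilde y^{(s)} \in \{0, 1\}^\Z$ with $\tilde y^{(s)}(k) = [y_k = s]$ satisfies $\sum_i a_i \tilde y^{(s)}(k + i) = 0$ in $A$ for every $k$ (by shift-invariance of $Y$, interpreting $a_i \cdot \tilde y^{(s)}(k+i)$ as integer scalar multiplication), so $\tilde y^{(s)}$ lies in
\[ K' := \Bigl\{ v \in \{0, 1\}^\Z : \sum_i a_i v_{k + i} = 0 \text{ in } A, \ \forall k \in \Z \Bigr\}. \]
Since $y_k$ is the unique $s$ with $\tilde y^{(s)}(k) = 1$, the map $y \mapsto (\tilde y^{(s)})_{s \in \Sigma}$ embeds $Y$ into $(K')^\Sigma$. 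If $K'$ is finite then $Y$ is finite; but a faithful $\Z$-subshift must be infinite (the lcm of orbit periods of a finite shift-invariant set would be a nonzero common period), giving a contradiction.

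The main obstacle is the finiteness of $K'$. Here is the key determinism step: if $v, v' \in K'$ agree on $[0, N-1]$, subtracting the equations $(av)_0 = 0$ and $(av')_0 = 0$ yields $a_N \cdot (v_N - v'_N) = 0$ in $A$; but $v_N - v'_N \in \{-1, 0, 1\}$ and $a_N \neq 0$ implies $\pm a_N \neq 0$, forcing $v_N = v'_N$. Iterating this forward with $(av)_k = 0$ for $k = 1, 2, \ldots$ and backward with $(av)_k = 0$ for $k = -1, -2, \ldots$ (where $a_0 \neq 0$ plays the symmetric role on the left end), any length-$N$ window of $v$ determines all of $v$, so $|K'| \leq 2^N$. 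The $\{0,1\}$-restriction is essential here: the unrestricted kernel $\{v \in A^\Z : av = 0\}$ can be infinite when $A$ is not a field (e.g.\ for $A = \Z_4$ and $a = 2$ one gets $\{0, 2\}^\Z$), so the argument cannot be run in $A^\Z$ directly; it is precisely the three-value restriction $v_N - v'_N \in \{-1, 0, 1\}$ together with the nonvanishing of the extreme coefficients $a_0, a_N$ that makes the determinism go through uniformly in finite abelian $A$.
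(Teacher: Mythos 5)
Your proof is correct. It runs on the same engine as the paper's argument --- the extremal nonzero coefficient of $f$ forces the $\{0,1\}$-indicator sequence of each partition element to satisfy a deterministic two-sided recurrence, hence to be periodic with bounded period, which is incompatible with faithfulness of a $\Z$-subshift --- only packaged as a direct proof (pull the explicit one-hot certificate $\theta(y) = (a \mapsto a\cdot e_{y_0})$ with $B = A^\Sigma$ back through the factor map, then bound $|K'| \leq 2^N$) rather than as the paper's contrapositive, which quantifies over all clopen partitions and concludes that every subshift factor of a non-$A$ithful $\Z$-system is uniformly periodic.
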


\begin{proof}
We prove the contrapositive, so suppose the action is has $A$-cancellation. Consider any clopen partition $X = P_1 \sqcup \cdots \sqcup P_n$, and for $i \in \{1,\dots,n\}$ define $\theta : X \to \End(A)$ by $\theta(x) = \id$ for $x \in P_i$, $\theta(x) = 0$ otherwise. Since the action has $A$-cancellation, there exists nonzero $f \in A[G]$ such that
\[ \sum_{g \in G} \theta(gx)(f(g)) = 0 \]
for all $x \in X$.

Let $g' = \max \{g \in \Z \;|\; f(g) \neq 0_A\}$ in the usual ordering of $\Z$. Then for all $x \in X$ we have $\sum_{g < g'} \theta(gx)(f(g)) = -\theta(g' x) (f(g'))$ for all $x$. We have to have $g' x \in P_i$ whenever $\sum_{g < g'} \theta(gx)(f(g)) \neq 0$ (though this may not be sufficient), and have to have $g' x \notin P_i$ whenever $\sum_{g < g'} \theta(gx)(f(g)) = 0$, so whether $g' x \in P_i$ can be deduced from $\sum_{g < g'} \theta(gx)(f(g))$, and thus from the set of $g < g'$ such that $f(g) \neq 0$ and $g x \in P_i$.

This is easily seen to imply that the set of all $g \in G$ such that $g \cdot x \in P_i$ forms a finite union of arithmetic progressions with a bounded period (over all of $X$). Since this happens for all $i = 1, \dots, n$, the subshift factor given by the partition is uniformly periodic, thus not faithful.
\end{proof}

In particular, for $\Z$-subshifts, faithfulness (which is equivalent to infiniteness) is equivalent to not having $A$-cancellation for any nontrivial $A$.

Also, it is shown in \cite{MeSa19} that for $\Z$-actions commuting with $G$-subshifts on any group $G$ (i.e.\ $\Z$-actions of automorphisms of subshifts), faithfulness of the $\Z$-action is equivalent to faithfulness of the subshift factor given by the partition giving $G$-expansivity. We conclude that also for $\Z$-actions that are automorphisms of a $G$-subshift, faithfulness is equivalent to not having $A$-cancellation for any nontrivial $A$.

\begin{question}
For which pairs $(G,A)$ does there exist a faithful $G$-subshift which has $A$-cancellation?
\end{question}

We do not know any faithful expansive actions of $\Z^d$ which have $A$-cancellation. We next show that the Ledrappier subshift does not have $A$-cancellation for $A = \Z_2$, even though (by definition) it admits a cancelling pattern over $\Z_2$. (In fact, it can be shown to not have $A$-cancellation for any nontrivial $A$.)

\begin{example}
\label{ex:Ledra}
Consider the Ledrappier subshift
\[ X = \{x \in \Z_2^{\Z^2} \;|\; \forall v \in \Z^2: x_v + x_{v+(0,1)} + x_{v+(1,0)} = 0\}, \]
and let $A = \Z_2$. Let $\theta : X \to \Hom(\Z_2,B)$ be any continuous function for $B$ an abelian group. We may assume $B = \Z_2^d$ for some $d$, and then $|\Hom(\Z_2, B)| = 2^d$. The map $\theta$ is then determined by $d$ and by a partition $(C_v)_{v \in B}$.

If the partition depends only on the symbol at $(0,0)$, then the system always has $\Z_2$-cancellation with respect to $\theta$: take
\[ f = 1_{\Z_2}\cdot(0,0) + 1_{\Z_2}\cdot(1,0) + 1_{\Z_2}\cdot(0,2) + 1_{\Z_2}\cdot(1,1), \]
(where $1_{\Z_2}$ is the generator of $\Z_2$) and consider the sum
\[ \sum_{v \in \Z^2} \theta(vx)(f(v)). \]
By assumption, $\theta(vx)$ only depends on $x_v$, and contributes a particular vector of $B$ depending on this value. In any single coordinate of $B$, we thus either add $1$ in any case (in which case the sum is $0$ since $f$ has support of size four), never add $1$ (a trivial case), or we add $1$ when $x_v = a$ for a particular $a \in \Z_2$. This amounts to counting the parity of the number of $0$s or $1$s in $x|_{\{(0,0),(1,0),(0,2),(1,1)\}}$, and a short calculation shows that this is always even.

Setting $d = 1$, $C = \{x \in X \;|\; (x_{(0,0)},x_{(-1,0)}) = (0,1)\}$, and letting $\theta|_C = \id, \theta|_{X \setminus C} = 0$, on the other hand, proves non-$\Z_2$-cancellation:
consider any nonzero $f \in \Z_2[\Z^2]$. The sum $\sum_{v \in \Z^2} \theta(vx)(f(v))$ now amounts to counting (modulo $2$) how many times $C$ is seen on the support of $f$, i.e.\ how many $0$s there are on the support of $f$, such that the symbol on the left is $1$.

Let $i \in \Z$ be the leftmost x-coordinate that appears in the support of $f$. There is a configuration in the Ledrappier subshift such that on the $i$th column, there is exactly one occurrence of $0$, and we can align it with one of the elements in the support of $f$ to obtain a configuration $x$. There is also a configuration $y \in X$ satisfying $y_{(i-1,j)} = 1$ for all $j \in \Z$, and any such configuration satisfies $y_{(i+k,j)} = 0$ for all $k \geq 0, j \in \Z$. Clearly the number of times $C$ is entered by $x$ and $x+y$ in the support of $f$ differs by exactly one, so one of these numbers is odd, and this configuration proves non-$\Z_2$-cancellation. \qee
\end{example}

Most of our examples of actions with non-$A$-cancellation, in particular all our cellular automata actions, come from the following stronger property.\footnote{It is possible that also the Ledrappier subshift has this property, but we have no proof.}

\begin{definition}
\label{def:StronglyFaithful}
An action $(G,X)$ has \emph{unique visits} if there is a clopen set $C \subset X$ such that for all $\emptyset \neq F \Subset G$ there exists $x \in X$ such that $\exists! g \in F: gx \in C$.
\end{definition}

\begin{lemma}
Every action with unique visits is faithful and does not have $A$-cancellation for any non-trivial $A$.
\end{lemma}

\begin{proof}
Assume a $G$-system $X$ has unique visits. Since non-$A$-cancellation implies faithfulness (and even a faithful subshift factor), it suffices to prove the latter claim, but we give a direct proof of faithfulness: let $g \in G \setminus \{e\}$ and take $F = \{e, g\}$. By unique visits there exists $x$ such that exactly one of $x, gx$ is in $C$. In particular $x \neq gx$ and the action is faithful.

To prove non-$A$-cancellation, let $A$ be any finite abelian group. Define $\theta : X \to \End(A)$ by $\theta^{-1}(\id_A) = C$, and $\theta(x) = 0$ for $x \notin C$. Suppose $f \in A[G]$ and that $\sum_{g \in G} \theta(gx)(f(g)) = 0$ for all $x \in X$. Let $F \Subset G$ be the support of $f$. If $F \neq \emptyset$, let $x \in X$ be given by unique visits, so there is a unique $g' \in F$ with $g'x \in C$. By the choice of $\theta$ we then have $\sum_{g \in G} \theta(gx)(f(g)) = \theta(g'x)(f(g')) = f(g') \neq 0$, a contradiction. Thus $F = \emptyset$, i.e.\ $f = 0$, thus the action has no $A$-cancellation.
\end{proof}

The following are proved in a straightforward fashion from the definitions.

\begin{lemma}
\label{lem:Subgroup}
If $(G, X)$ has unique visits (resp.\ non-$A$-cancellation) then so does $(H, X)$ for any $H \leq G$.
\end{lemma}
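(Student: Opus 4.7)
Both parts of the lemma should follow directly from the observation that the defining data for $(G,X)$ (the clopen set $C$ for strong faithfulness, or the pair $(\theta,B)$ for $A$ithfulness) continues to witness the corresponding property when we restrict attention from $G$ to any subgroup $H \leq G$. The plan is to just verify the definitions, using in each case the embedding $H \hookrightarrow G$ to transport test data from $H$ back to $G$.

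For strong faithfulness, I would take the same clopen set $C \subset X$ provided by the definition for $(G,X)$. Given any finite nonempty $F \Subset H$, observe that $F$ is also a finite nonempty subset of $G$, so by strong faithfulness of $(G,X)$ there exists $x \in X$ and a unique $g \in F$ with $gx \in C$. Since $F \subset H$, this $g$ is an element of $H$, and uniqueness within $F$ as a subset of $H$ is the same as uniqueness within $F$ as a subset of $G$. Hence $C$ witnesses strong faithfulness of $(H,X)$.

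For $A$ithfulness, I would take the same finite abelian group $B$ and the same continuous $\theta : X \to \Hom(A,B)$ that witnesses $A$ithfulness of $(G,X)$. Given $f \in A[H]$ (a finitely supported function $H \to A$), extend it by zero to $\tilde f \in A[G]$, setting $\tilde f(g) = 0$ for $g \in G \setminus H$. Since only terms with $g \in H$ contribute,
\[ \sum_{g \in G} \theta(gx)(\tilde f(g)) \;=\; \sum_{h \in H} \theta(hx)(f(h)) \]
for every $x \in X$. If the right-hand side vanishes for all $x$, then so does the left-hand side, and $A$ithfulness of $(G,X)$ forces $\tilde f = 0$, hence $f = 0$. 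Thus $(\theta, B)$ witnesses $A$ithfulness of $(H,X)$.

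There is no real obstacle here; the only point to mention is that the action of $H$ on $X$ referenced in the statement is of course just the restriction of the $G$-action, so the notation $hx$ and $gx$ agree when $h \in H \subset G$. Both arguments are essentially a one-line unravelling of the definitions, which is presumably why the paper advertises the proof as straightforward.
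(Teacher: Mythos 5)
Your proof is correct and is exactly the straightforward verification the paper intends (it offers no explicit proof, only the remark that the lemma follows directly from the definitions): reuse the same clopen set $C$ for strong faithfulness, and the same pair $(\theta,B)$ with extension of $f$ by zero for $A$ithfulness. Nothing further is needed.
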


\begin{lemma}
\label{lem:Supersystem}
If $(G, X)$ has unique visits (resp.\ non-$A$-cancellation) then so does $(G, Y)$ for any $Y \supset X$.
\end{lemma}

We sandwich unique visits between two other properties, which may clarify it. A \emph{non-recurrent point} in $X$ is $x \in X$ such that for some open set $U \ni x$, we have $gx \in U \implies g = e_G$. A \emph{horoball} in a finitely-generated group $G$ is a limit of balls with radius tending to infinity, with respect to a fixed generating set (see \cite{EpMe20}; here we include the degenerate horoball $G$, and one should take limits with right translates of balls $B_r g$).

\begin{lemma}
\label{lem:SFImplications}
If $(G,X)$ has a non-recurrent point, it has unique visits.
If $(G,X)$ has unique visits and $G$ is generated by the finite set $S$, then there exists a clopen set $C$, an $S$-horoball $H$ and $x \in X$ such that $hx \in C$ for a unique element $h \in H$.
\end{lemma}

\begin{proof}
For the first claim, suppose 
$(G, X)$ has a non-recurrent point $x \in X$. Then there is also a clopen set $C \ni x$ such that $gx \in C \implies g = e_G$. Using this clopen set, translates of $x$ can be used to satisfy the definition of unique visits.

For the latter claim, under unique visits, for any finite $F \Subset G$ there exist $x \in G$ and $g \in F$ such that
\[ \{h | hx \in C\} \cap F = \{g\} \]
This implies $\{hg^{-1} | hx \in C\} \cap F g^{-1} = \{e\}$ implies
\[ \{h | h g x \in C\} \cap F g^{-1} = \{e\} \]
and thus letting $y(F) = g x$ and $t(F) = F g^{-1}$ we have $y(F) \in C$ and $\forall h \in t(F) \setminus \{e\}: hy(F) \notin C$.

Let $P$ be the set of pairs $(H, z)$ such that $e \in H$ and 
\[ \forall h \in H: (hz \in C \iff h = e). \]
Clearly this set is closed in $2^G \times X$ since $C$ is clopen. In particular applying the observation of the previous paragraph to balls with respect to the generating set $S$, one obtains an $S$-horoball $H$ and $z \in C$ such that the $H$-orbit of $z$ does not revisit $C$.
\end{proof}


\begin{remark}
Of course one can apply the argument we used to get horoballs to finite sets other than balls.
In the case of $\Z^2$, in a precise sense an ``optimal'' sequence of sets to apply it to are discretized $\ell^2$-balls whose boundaries eventually contain arithmetic progressions in every rational direction, namely this forces $t$ to give a set $H \ni \{(0,0)\}$ in the limit which contains the predecessor set $N$ of $(0,0)$ in some translation-invariant total order of $\Z^2$ -- this is the best we can do since $t$ could plausibly always force translates inside any such set $N$. \qee
\end{remark}

\begin{remark}
Having a non-recurrent point can be stated in many ways. First, it is immeidately equivalent to having a point that is isolated in its own orbit. Second, if $G$ is an infinite group, let $X_{\leq 1} = \{x \in \{0,1\}^G \;|\; \sum x \leq 1\}$ be the \emph{sunny-side-up} subshift on $G$. Say a system is a \emph{subquotient} of another if it is a factor of a subsystem. Now, it is not difficult to show that having a sunny-side-up subshift as a subquotient is equivalent to having a point that is isolated in its orbit.
\end{remark}

\begin{example}
There is a unique visits subshift where no point is non-recurrent. For example the Cantor's dust $\Z^2$ subshift \cite[Figure~1c]{Sa20} has this property. On the other hand, consider any $\Z^2$-subshift obtained as the orbit closure of a discretization of a rational line in direction $\vec v$ (see \cite[Figure~1b]{Sa20}, but consider a rational continuation). Such a subshift does not have unique visits (as it is not even faithful), but for any generating set whose convex hull has no edge in direction $\vec v$ (up to orientation), there are points that visit $[1]$ (the clopen set of drawings that hit the origin) just once in a horoball. \qee
\end{example}

\section{Actions with unique visits by cellular automata}
\label{sec:SFActions}

\subsection{Robustness properties}

Before giving examples of interesting groups acting with unique visits, we prove some closure and robustness properties. First, we observe that just like the definition of $\mathcal{G}$ does not depend on the alphabet, also groups acting with unique visits or without $A$-cancellation do not depend on the alphabet. We prove this more generally for sofic shifts. In brief, sofic shifts on the group $\Z$ are (up to isomorphism) sets of edge-labelings of bi-infinite paths in finite edge-labeled directed graphs. More information on sofic shifts can be found in \cite{LiMa95}.

\begin{lemma}
\label{lem:FullShiftThenSofic}
If $G$ acts with unique visits (resp.\ without $A$-cancellation) by automorphisms on some full shift $(\sigma, \Sigma^\Z)$, then it admits an action with unique visits (resp.\ non-$A$-cancellation) by automorphisms on any sofic $\Z$-shift with uncountably many points.
\end{lemma}

\begin{proof}
As we explained in the proof of Theorem~\ref{thm:GraphProducts}, conveyor belts allow us to simulate an action by automorphisms of $\Delta^\Z$, by automorphisms of $\Sigma^\Z$, by picking an unbordered set of words over $\Sigma$, all of the same length, which are in bijection with $\Delta^2$, connecting the simulated symbols of $\Delta$ into conveyor belts, and applying the simulated automorphisms on these belts. This construction is done more generally for a class of SFTs in \cite{KiRo90}, and generalized to all uncountable sofic shifts in \cite{Sa18d}. The idea is exactly the same in each case, so we concentrate on the case of an embedding into the automorphism group of a full shift $\Sigma^\Z$.

Now suppose we have an action by some group $G$ of automorphisms of $\Delta^\Z$ which has unique visits. Then we claim that automatically any conveyor belt simulation of it by $H \cong G$ also has unique visits. Namely if $C$ is a clopen set for the original action proving unique visits, then we can describe a clopen set $C'$ proving unique visits as follows: Given $x \in \Sigma^\Z$, determine the simulated configuration on conveyor belts near the origin. Put $x \in C'$ if the simulated configuration on the top track is in $C$. This gives a finitary rule for determining whether $x \in C'$, so $C'$ itself is clopen. Considering configurations where the simulated configurations are $\Z$-shaped (i.e.\ the conveyor belts do not wrap around), the $G$-action having unique visits implies the same property for $H$.

For non-$A$-cancellation, the argument is completely analogous.
\end{proof}

More abstractly, one may observe that there is a closed set in $\Sigma^\Z$ where we fix the shape of the conveyor belt containing the origin so that it does not wrap, have no other conveyor belts, and fix the data outside of the conveyor belt. Then the dynamics on this closed set is conjugate to the action of the automorphism group of $\Delta^\Z$, and we inherit the unique visits property and $A$-cancellation from any subsystem with this property, by Lemma~\ref{lem:Supersystem}.


\begin{lemma}
\label{lem:Virtually}
If $G$ acts by automorphisms on some full shift $(\sigma, \Sigma^\Z)$ with unique visits, and $H$ is commensurable to $G$, then $H$ admits an action with the same properties.
\end{lemma}

\begin{proof}
By Lemma~\ref{lem:Subgroup}, it is enough to show that the class of groups acting with unique visits by full shift automorphisms is closed under passing to finite index supergroups, so suppose $G \leq H$ is of finite index and $G$ admits a such an action. Suppose first $G$ is infinite.

The result is a straightforward from the proof of closure under finite extensions in \cite{KiRo90}. We briefly recall this idea. Recall that if $[H : G] = n$, \cite{KiRo90} constructs the induced representation of $H$ on $(\Sigma^n)^\Z$ from a set of left coset representatives $H = h_1G \cup \cdots \cup h_kG$ where $h_1 = e_H$, by the intuition that a configuration $x \in (\Sigma^n)^\Z$ can be seen as $(x_1, \ldots, x_k) \in (\Sigma^\Z)^n$ and writing this as $(h_1 x_1, \ldots, h_k x_k)$, there is an obvious formula for the action of $h \in H$ on such a tuple: simply write the set $\{h h_1 x_1, \ldots, h h_k x_k\}$ in the form $\{h_1 y_1, \ldots, h_k y_k\}$ in the natural way (noting that for all $i$, we have $h h_i \in h_j G$ for some $j$, and $i \mapsto j$ is a permutation), and then reorder the tuple.

Let $C$ be the clopen set proving unique visits for the $G$-action, and let $C'$ to be the clopen set of configurations where the coordinate corresponding to the trivial coset contains a configuration from $C$.

Now consider nonempty $F \Subset H$, w.l.o.g.\ suppose $e_G \in F$ so $F \cap G$ is a nonempty subset of $G$ and apply unique visits to obtain a configuration $x$ such that there is a unique element $h \in F \cap G$ such that $h x \in C$. Put the configuration $x$ in the coordinate corresponding to the trivial coset, and in other coordinates put a configuration $y$ whose $G$-orbit does not enter $C$ (which is possible when $G$ is infinite by e.g.\ the existence of a sunny-side-up subquotient). Let $z \in (\Sigma^n)^\Z$ be the resulting configuration.

Now, observe that the $H$-orbit of $z$ contains only configurations where the coordinates contain elements from $Gx$ and $Gy$, and the coordinate of $gz$ corresponding to the trivial coset contains an element of $Gx$ if and only if $g \in G$. Now $hz$ is in $C'$ by the definition of the induced action. On the other hand, $gz \in C'$ implies that the trivial coset coordinate of $gz$ contains an element in $C$, thus an element of $Gx$, thus $g \in G$. But $g \in F \cap G$ implies $g = h$, thus $h$ is the only element of $F$ mapping $z$ into $C'$.

To cover finite extensions of finite groups $G$, it is necessary and sufficient to prove that all finite groups act with unique visits. The cellwise application of the regular representation of $G$ gives an action on $G^\Z$ by automorphisms with unique visits.
\end{proof}

\subsection{Direct products}


\begin{lemma}
\label{lem:Product}
The class of groups acting with unique visits by automorphisms on a full shift is closed under finite direct products.
\end{lemma}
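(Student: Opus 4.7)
The plan is to handle the product of two groups (the general finite case then follows by induction) via the obvious stacked-alphabet construction: if $G_1 \leq \Aut(\Sigma_1^\Z)$ and $G_2 \leq \Aut(\Sigma_2^\Z)$ act strongly faithfully on their respective full shifts, let the alphabet be $\Sigma = \Sigma_1 \times \Sigma_2$, view a configuration $x \in \Sigma^\Z$ as a pair $(x^{(1)}, x^{(2)})$ of tracks, and let $(g_1,g_2) \in G_1 \times G_2$ act as $(x^{(1)}, x^{(2)}) \mapsto (g_1 x^{(1)}, g_2 x^{(2)})$. Since each $g_i$ is a shift-commuting automorphism on its track, the product is a shift-commuting automorphism of $\Sigma^\Z$, and the map $G_1 \times G_2 \to \Aut(\Sigma^\Z)$ is a well-defined injective homomorphism (commutativity of the two factor actions is automatic because they touch disjoint tracks).

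For strong faithfulness, I would let $C_i \subset \Sigma_i^\Z$ be the clopen witness for $G_i$ and define the clopen set
\[ C = \{(x^{(1)}, x^{(2)}) \in \Sigma^\Z \;|\; x^{(1)} \in C_1 \wedge x^{(2)} \in C_2\}. \]
Given any nonempty $F \Subset G_1 \times G_2$, first use strong faithfulness of $G_1$ on the projection $F_1 = \pi_1(F) \Subset G_1$ to obtain $x^{(1)} \in \Sigma_1^\Z$ and a unique $g_1^* \in F_1$ with $g_1^* x^{(1)} \in C_1$. Then consider the nonempty fiber $F_2 = \{g_2 \in G_2 \;|\; (g_1^*, g_2) \in F\} \Subset G_2$ and use strong faithfulness of $G_2$ to obtain $x^{(2)} \in \Sigma_2^\Z$ and a unique $g_2^* \in F_2$ with $g_2^* x^{(2)} \in C_2$. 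Setting $x = (x^{(1)}, x^{(2)})$, the condition $(g_1,g_2)x \in C$ forces $g_1 x^{(1)} \in C_1$, hence $g_1 = g_1^*$, which places $g_2$ in $F_2$, and then $g_2 x^{(2)} \in C_2$ forces $g_2 = g_2^*$; the unique element of $F$ mapping $x$ into $C$ is $(g_1^*, g_2^*)$.

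The step to arbitrary finite direct products is a straightforward induction: $G_1 \times \cdots \times G_k \cong (G_1 \times \cdots \times G_{k-1}) \times G_k$, and the inductive hypothesis together with Lemma~\ref{lem:FullShiftThenSofic} (to bring both factors onto comparable full shifts before stacking) closes the argument. I do not anticipate a serious obstacle here; the only mild subtlety is the nested choice of tracks (first $G_1$, then a fiber over $g_1^*$ in $G_2$), but this is exactly what the definition of strong faithfulness allows.
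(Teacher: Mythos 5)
Your proof is correct and takes the same approach as the paper, which simply asserts that the product action on the stacked full shift is obviously strongly faithful; your nested selection (first a unique $g_1^*$ in $\pi_1(F)$, then a unique $g_2^*$ in the fiber over $g_1^*$, with witness set $C_1\times C_2$) is exactly the detail the paper leaves implicit. The appeal to Lemma~\ref{lem:FullShiftThenSofic} is unnecessary, since stacking works for arbitrary alphabets, but it is harmless.
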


\begin{proof}
Let $G_1, \ldots, G_n$ be groups acting  with unique visits by automorphisms on full shifts with disjoint alphabets $\Sigma_i$. Let $C_i$ be the respective clopen sets proving unique visits. Consider the natural action of the group $G = \bigoplus_i G_i$ on $\Delta = \prod_i \Sigma_i$ with $G_i$ acting on the $\Sigma_i^\Z$-component. We prove unique visits with $C = \prod_i C_i$.

If $F \Subset G$, consider the natural projection $\pi_1(F) = F_1$ of $F$ into $G_1$. This is a nonempty finite subset of $G_1$, so by unique visits of the action there exists $g_1 \in F_1$ and $x_1$ such that $g_1 x_1 \in C_1$ but $h_1 x_1 \notin C_1$ for $h_1 \in F_1 \setminus \{g_1\}$. Note that if $h \in F$ does not have $\pi_1$-projection $g_1$, then $h (x_1, y_2, \ldots, y_n) \notin C$ for any choice of $y_i$.

Now repeat the previous argument for $F \cap \pi_1^{-1}(g_1)$ and projection $\pi_2$, to obtain $g_2$ such that $\pi_1 \times \pi_2 (F) \ni (g_1, g_2)$ and $(g_1, g_2)(x_1, x_2) \in C_1 \times C_2$ but $h (x_1, x_2, y_3, \ldots, y_n) \notin C$ if $\pi_1 \times \pi_2(h) \neq (x_1, x_2)$. Repeating this argument, we finally obtain $g = (g_1, \ldots, g_n) \in F$ such that $g(x_1, x_2, \ldots, x_n) \in C$ but $h(x_1, x_2, \ldots, x_n) \notin C$ for $h \in F \setminus \{g\}$.
\end{proof}


We do not know whether either class (unique visits or non-$A$-cancelling cellular automata actions) are closed under infinite direct sums, indeed for infinite sums we used a different construction in the proof of Theorem~\ref{thm:GraphProducts}, and the following remark outlines the problem with this approach, at least for functions that look at only one coordinate (and non-$A$-cancellation fails even for product polynomials).

\begin{remark}
The direct sum case of Theorem~\ref{thm:GraphProducts} does not typically give actions with unique visits or non-$A$-cancellation, at least if the clopen set $C$ (resp.\ function $\theta$) looks at just one coordinate. Perform the construction for $G \times G'$. It turns out the action has $A$-cancellation no matter how $G$ and $G'$ act. To see this, take $f = (e_G - g) (e_{G'} - g') \in \Z_2[G \times H]$.

Here we suggestively use additive notation and standard conventions and notation for the group ring structure, but strictly speaking we mean the element
\[ f = 1_{\Z_2} \cdot (e,e) + 1_{\Z_2} \cdot (g,e) + 1_{\Z_2} \cdot (e,g') + 1_{\Z_2} \cdot (g,g'). \]
Observe now that if the segment at the origin of a configuration $x$ is ``of type~$G$'' (the case $G'$ being symmetric), then we have
\begin{align*}
\sum_{(k,k') \in G \times G'} \theta((k,k')x)(f(k,k')) = \; & \theta(x)(f(e,e)) - \theta((g,e)x)(f(g,e)) \; + \\
& \theta((g,g')x)(f(g,g')) - \theta((e,g')x)(f(e,g')) = 0,
\end{align*}
if $\theta$ only looks at the central coordinate: we have
\[ \theta(x)(f(e,e)) = \theta((e,g')x)(f(e,g')) \]
and 
\[ \theta((g,e)x)(f(g,e)) = \theta((g,g')x)(f(g,g')), \]
since $f((e,e)) = f((e,g')), f((g,e)) = f((g,g')$ by our choice of $f$ and since the action of $(e,g')$ is not visible at the origin. Similarly, unique visits are contradicted for any $C$ that only looks at the central coordinate. \qee
\end{remark}

\subsection{Property~P and free products}

Next we construct actions with unique visits for some free products. We do not know whether groups acting by automorphisms with unique visits are closed under free product (in fact, we do not know whether $\Aut(\Sigma^\Z)$ admits an action by automorphisms on $\Sigma^\Z$ with unique visits), but we define a technical strengthening of this property such that groups acting with this property are closed under free product.

To motivate this definition, we start by a simple remark. Consider $F \Subset \Aut(\Sigma^\Z)$ a finite set of automorphisms. Then the group $\langle F \rangle$ also acts naturally on $(\Sigma^2)^\Z \cong (\Sigma^\Z)^2$ by the diagonal action. Now if $g \in F$ has minimal radius $r$ which is maximal among elements of $F$, then we can find two points $x, y$ such that $x_i \neq y_i$ with $|i| = r$, and $x|_{\{-i+1, \ldots, i-1\}} = y|_{\{-i+1, \ldots, i-1\}}$, and $g(x)_0 \neq g(y)_0$. Clearly, for any $h \in F \setminus \{g\}$ with minimal radius less than $r$, we will have $h(x)_0 = h(y)_0$.

For example, one can show using this that the diagonal action of the shift map has unique visits: In this case $F \Subset \langle \sigma \rangle$ and there will be at most two $g, g' \in F$ sharing the maximal minimal radius. It suffices to show that we can find a pair of configurations $x, y$ where a difference propagates at maximal rate for $g$, but the same does not happen with $g'$, and this is clear because these maps move the difference in different directions. (Of course, for $\Z$ this is not so useful, as an equally easy if not easier proof shows that the standard action of the shift already has unique visits.)

Note that it actually suffices that we can find $g \in G$ such that we can find \emph{some pair} $x, y$ of configurations such that when $g$ is applied to the pair, a difference $x_i \neq y_i$ ``travels'' faster into an area $I \subset \Z$ where $x|_I = y|_I$, than in the application of any other element of $G$. This sufficient condition is more or less formalized by the following property, and will turn out sufficient for closure under free products. 



\begin{definition}
\label{def:P}
Let $G$ be a group of cellular automata over alphabet $\Sigma$. We say $G$ has \emph{property~P} if for each $\emptyset \neq F \Subset G$ we can find $g \in F$, and words $|v| = |u|$, $a, b \in \Sigma$, such that for some words $a', b' \in \Sigma$ we have
\[ g((uabv)^\Z)_{\{-1,0\}} \neq g((ua'b'v)^\Z)_{\{-1,0\}} \]
and for all $h \in F \setminus \{g\}$, we have
\[ h((uabv)^\Z)_{\{-1,0\}} = h((ua'b'v)^\Z)_{\{-1,0\}}. \]
\end{definition}

This is easier to understand in terms of actions on conveyor belts (i.e.\ the action on words $(u, v)$ over alphabet $\Sigma^2$ obtained by conjugating them to periodic points $(uv^\bot)^\omega$). In these terms, the definition says that we can always find $g \in F$, and a conveyor belt with $u$ on the top track, and $v$ on the bottom track (in reverse), so that after a suitable modification of the last double letter of the conveyor belt, after applying $g$ we see a change in the initial double letter, but applying any $h \in F \setminus \{g\}$ will not produce a change.

\begin{remark}
\label{rem:Identity}
A few technical points should be made about edge cases: The case $F = \{e_G\}$ holds trivially for any action, by taking $g = e_G$ and $ab, a'b'$ any distinct words of length $2$. On the other hand, if $e_G \in F \neq \{e_G\}$, then the $g$ chosen in the definition can never be the identity element, as we would have to have $u = v = \varepsilon$ (by the inequality involving $g$), and thus the equality involving $h$ implies that $h \neq e_G$ maps two configurations from outside the diagonal into it. If $F$ has at least two elements, for a similar reason we must always choose $u,v$ such that $|u| = |v| \geq 1$.
\end{remark}

Property~P is stronger than unique visits in the following sense.

\begin{lemma}
Let $G$ be a group of automorphisms acting faithfully with property~P on $\Sigma^\Z$. Then the diagonal action $G \curvearrowright (\Sigma^\Z \times \Sigma^\Z)$ has unique visits.
\end{lemma}

\begin{proof}
This is immediate from the definition, using the clopen set $C = \Sigma^2 \setminus \{(s, s) \;|\; s \in \Sigma\}$.
\end{proof}

\begin{lemma}
The group generated by the shift has property~P.
\end{lemma}

\begin{proof}
This is almost immediate from the discussion in the beginning of the section. If $F$ is a finite subset of $\langle \Z \rangle$, let $g = \sigma^k \in F$ be such that $|k|$ is maximal. If $k = 0$ (so $F = \{e\}$) the proof is trivial. Otherwise, suppose $k > 0$ (the case $k < 0$ being symmetric).

Then let $|u| = |v| = k$ be arbitrary, let $w = w' = w''' = 0$, $w'' = 1$. Clearly when applied to a conveyor belt, we have that $g((uw, vw'))_0 \neq g((uw'', vw'''))_0$ because the unique difference $w'' \neq w'''$ travels along the top track of the conveyor belt to the origin. But no other element of $F$ will propagate this difference as far. In particular, we may have $g^{-1} \in F$, but seeing it as a repeated application of $\sigma^{-1}$, the first application of $\sigma^{-1}$ does not propagate it, moves the difference on the bottom track of the conveyor belt, and thus there is no time for it to reach the left end.
\end{proof}



\begin{lemma}
Finite groups admit actions with property~P by automorphisms of a full shift.
\end{lemma}

\begin{proof}
We act on words $w_n = 20^{2n} 1 0^{2m-2n} 3$ freely, for fixed large enough $m$. Let $F$ be any finite subset of the finite group $G$. If $F = \{e\}$ then the claim is trivial with $g = e$, just take any two distinct words of length $2$, say $ab \neq a'b'$, and take $u = v = \varepsilon$.

Otherwise pick any non-trivial element $g \in F$. Suppose $g(w_n) = w_j$ with $n \neq j$, and wrap the word $w_n$ so that it covers part of a conveyor belt (of length at most $2m + 2$ as a double word), and in such a way that the conveyor belt has either $(3,0)$ or $(0,3)$ as its last symbol, $3$ does not otherwise appear in it, and after applying $g$, the leftmost symbol is $(1,0)$ or $(0,1)$ (i.e.\ the $1$-position of the image $w_j$ is at the left end).

Note that after applying any other $h \in G$, the first symbol will be $(0,0)$, since there is a unique appearance of a word $w_i$ (namely $w_n$) in the periodic point corresponding to the initial conveyor belt, the positions of $1$s differ by two cells in different such words, and the action of $G$ is free.

Now if we change the rightmost double symbol $(0,3)$ or $(3,0)$ to $(0, 0)$ in the conveyor belt, then no element of $G$ modifies the conveyor belt, since there are now no occurrences of $3$ in the periodic point it represents, thus no occurrences of double words $w_i$. Thus, this difference in the rightmost double symbol of the conveyor belt travels to the leftmost double symbol in the action of $g$, but does not travel there in the action of any other element, since after any $h \neq g$ is applied, the leftmost double symbol will be $(0,0)$.
\end{proof}

\begin{lemma}
\label{lem:FreeProduct}
Suppose that finitely many groups $(G_i)_{i \in I}$ each act with property~P by automorphisms on a one-dimensional full shift. Then their free product also admits such an action. 
\end{lemma}

\begin{proof}
Suppose $I = \{1,2,\dots,\ell\}$. 
We consider the construction from the finite case of Theorem~\ref{thm:GraphProducts} (in the special case of free products), and assume the groups $G_i$ all have property~P. 

If $F \Subset G = G_1 * G_2 * \cdots * G_\ell$, let $m$ be such that the maximal number of syllables in an element of $F$ is $m$, when written in irreducible form.

We start with an overview. In the proof, we will apply essentially the proof of Theorem~\ref{thm:GraphProducts} (specialized to free products) to a specific element $g = g_1 \cdots g_m \in F$ (coming from an irreducible word $w = w_1 \cdots w_m \in (\bigcup_i G_i)^*$), to find a pair of words $t, t'$ which consist of segments corresponding to syllables of $w$, which only differ in the rightmost double symbol, and when $g$ is applied diagonally to $(t, t')$, the leftmost difference moves faster than in the application of any other element of $F$.

Since we are dealing with a free product, analogously to as how the proof Theorem~\ref{thm:GraphProducts} specializes to this situation, we will take exactly one segment per syllable. If we index the segments from left to right as $1, \ldots, m$, then we will specifically ensure that after applying $g^j = g_j \circ \cdots g_m$ to the pair $(t, t')$, the leftmost segment where in $g^j(t)$ and $g^j(t')$ there is a difference in the initial double symbols is going to be segment $j$.

Let us now get into more detail. We need one minor modification in the construction of Theorem~\ref{thm:GraphProducts}, namely to add a ``wall symbol'' $\#$ which is never modified, and whose role is to allow us to cut segments artificially (this is needed at the very end). Such a symbol can of course be added directly, and the proof goes through verbatim. Alternatively we can add a group $G_0$ commuting with all other groups, and then finally ignore its action -- the symbols corresponding to segments of that group will now act as walls.

The construction of the pair $t, t'$ takes its inspiration from the proof Lemma~\ref{lem:Product}. We can start by fixing any type $u = \tau(w) \in \{1, \ldots, \ell\}^m$ of the maximal length $m$, such that $w$ is irreducible and represents an element $g \in F$. Consider now the set of all words $w$ with $\tau(w) = u$, representing an element of $F$, and let $F_m$ be the set of all group elements obtained as $w_m$ as $w$ ranges over this set. Note that $e_G \notin F_m$ because we consider only irreducible words.

Since $F_m \cup \{e_G\}$ is a finite nonempty subset of $G_{u_m}$, we can apply property~P to obtain $g_m \in F_m$ (by Remark~\ref{rem:Identity} indeed $g_m \neq e_G$) such that some pair of conveyor belts $t^m, t'^m$ of the same length only differ in the last double symbol, and after applying $g_m$ they differ in their first double symbol, but they never differ in their first double symbol after applying another element of $F_m \cup \{e_G\}$. Furthermore, $t^m$ and $t^{m'}$ are of length at least $2$ as double words.

These will be the suffixes of $t, t'$ respectively. Now restrict $F$ to elements of length $m$ which have syllable structure $u$, and inductively continue on this set, analogously to the proof of Lemma~\ref{lem:Product}, to build pairs $t^j, t'^j$ and group elements $g_j$ by applying property~P on each step. (These will not directly be subsegments of $t, t'$, however.)

We will pick $g = g_1 g_2 \cdots g_m$ as our final element. To construct the words $t, t'$, we note as in the proof of Theorem~\ref{thm:GraphProducts} that because information cannot flow to the right, it is possible to ensure, by picking suitable initial contents for each segment and by picking suitable $\SSS$-symbols between segments, that when applying elements $h^j = g_j \circ \cdots \circ g_m$ to the pair $(t, t')$ with decreasing $j$, the first time a difference appears in the left double symbol of the $j$th segment is indeed precisely immediately after $h^j$ is applied, and at this point (if $j > 1$), the contents of the $(j-1)$th segment is precisely $t^{j-1}, t'^{j-1}$ due to the side-effect making a change in the rightmost symbol of that segment.

In particular, then in the action of $g$, a difference in the last double symbols of $t, t'$ propagates to the leftmost double symbol. We argue that the same does not happen for any $h \in F \setminus \{g\}$. Namely, suppose $h \in F \setminus \{g\}$ is represented by $w'$. Write $h = h_1 \circ \cdots \circ h_{m'}$ where $h_i$ correspond to syllables (i.e.\ come from different groups $G_j$ and are nontrivial).




Suppose first $\tau(w') \neq u$. We have $|\tau(w')| \leq m$, so we can write $\tau(w') = u' = v'bq$, $u = vaq$ for some $q$, with $a \neq b$ and $|v'| \leq |v|$. Now it is easy to see that after applying syllable by syllable, after applying the syllables corresponding to $q$, the leftmost segment where $t, t'$ could possibly differ in the leftmost double symbol after the application of $h$ is $m - |q| + 1$ (and they may or may not also differ in the rightmost symbol of segment $m - |q|$), with the interpretation that if $|q| = 0$ then the words are precisely $t,t'$ and only differ in the rightmost double symbol. The segment $m - |q|$ has type $a$, so $b$ will not modify it, and there is no time for the difference to reach the leftmost double symbols in the remaining application of the elements $h_i$ comprising $h$.

Suppose then that $h = h_1 \circ \cdots \circ h_m$ has the same syllable structure $u$ as $g$. Since it is not actually the same element, at least one syllable has to be different, and there is a rightmost such syllable. Now, we can apply a similar argument as in the previous paragraph: suppose $h_j \neq g_j$ but $h_i = g_i$ for $i > j$, then $k = h_{j+1} \circ \cdots \circ h_m = g_{j+1} \circ \cdots \circ g_m$.

By construction, in $k((t, t'))$ the $j$th segments contain the two conveyor belts $t^j, (t')^j$ proving property~P for the group $G_{u_j}$ (see the description of property~P in terms of conveyor belts below Definition~\ref{def:P}), specifically applied to a finite set of group elements including $h_j$, and $g_j$ is the unique element in this set that propagates a difference to the left end when applied to $(t^j, (t')^j)$. Thus, at this point $h_j$ fails to propagate the difference at the maximal rate, and the remaining applications of $h_i$, $i < j$, cannot catch up.

To show that we have property~P, observe that the discussion above is in terms of an action on finite words, and that when applied to configurations, the words will behave the same way unless the context lengthens the left or rightmost segments, or produces side-effects. Thus, if we take the two words $t, t'$, consider them as the top tracks of a two conveyor belt, and on the bottom track of the conveyor belts put $\#^{|t|}$, so that indeed the top tracks behave as finite words, then they prove property~P for the set $F$.
\end{proof}

We see no reason why this could not be generalized to infinite free products analogously to Theorem~\ref{thm:GraphProducts}, but have opted for the simpler statement. 

The previous results immediately give the following:

\begin{corollary}
The free product of any finite family of finite groups and copies of $\Z$ acts with property~P (thus with unique visits, thus without $A$-cancellation for all $A$) by automorphisms of a full shift.
\end{corollary}

The fact finite free products act faithfully on full shifts was first proved in \cite{Al88}, and possibly this construction already has property~P.



Of course, $\Z$ embeds in a free product of finite groups, so its inclusion is superfluous in the previous corollary. The following is also immediate.

\begin{corollary}
Every free group acts with property~P (thus with unique visits, thus without $A$-cancellation for all $A$) by automorphisms on a full shift.
\end{corollary}


\section{Wreath products}

\begin{theorem}
\label{thm:WreathProducts}
Let $A$ be a finite abelian group and suppose $G \leq \Aut(\Sigma^\Z)$ acts with non-$A$-cancellation. Then $A \wr G \in \mathcal{G}$.
\end{theorem}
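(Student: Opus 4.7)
The plan is to realize the base group $\bigoplus_G A$ by additive updates on a new $B$-valued track, where $B$ and $\theta : \Sigma^\Z \to \Hom(A, B)$ come from Definition~\ref{def:Aithful}. Work on the full shift $(\Sigma \times B)^\Z \cong \Sigma^\Z \times B^\Z$, writing points as pairs $(x, b)$. Define an action of $G$ by $g \cdot (x, b) = (gx, b)$, i.e., by the given automorphism on the $\Sigma$-track and trivially on the $B$-track. Define an action of $A$ (embedded as the copy of $A$ placed at the identity of $G$ in the base group) by $a \cdot (x, b) = (x, b')$ with $b'_i = b_i + \theta(\sigma^i x)(a)$ for every $i \in \Z$. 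Since $\theta$ is continuous into a finite discrete set it factors through a finite window, so the $A$-action has a local rule and is a cellular automaton; it is clearly invertible, hence an automorphism.

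Next I would verify that these two assignments together respect the defining relations of $A \wr G$ from the presentation in Section~2. The relations internal to $A$ and $G$ are immediate. For the commutator relations, a direct calculation shows that each conjugate $a^g = g^{-1}ag$ fixes the $\Sigma$-track and acts on the $B$-track by adding a fixed sequence, namely $(\theta(\sigma^i gx)(a))_{i \in \Z}$, which depends only on $x$, $g$, and $a$, not on the current $b$. Since $B$ is abelian any two such conjugates commute on the $B$-track, so in particular $[a^g, a'^h] = e$ whenever $g \neq h$. Hence the assignment extends uniquely to a group homomorphism $\rho : A \wr G \to \Aut((\Sigma \times B)^\Z)$.

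For faithfulness, an arbitrary element of $A \wr G$ can be written as $(f, g)$ with $f \in A[G]$, and iterating the formulas above shows that it sends $(x, b)$ to $(gx, b + \eta(f, x))$, where $\eta(f, x)_i$ is (up to a harmless relabelling in $G$) the sum $\sum_h \theta(\sigma^i h x)(f(h))$ from the definition of $\phi_{\theta, B}$. If $(f, g)$ acts trivially, then $gx = x$ for all $x$, forcing $g = e$ by faithfulness of $G$ on $\Sigma^\Z$, and $\eta(f, x) = 0$ for all $x$, forcing the corresponding element of $A[G]$ into the kernel of $\phi_{\theta, B}$. By $A$ithfulness this kernel is trivial, so $f = 0$ as well. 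Thus $\rho$ is injective, embedding $A \wr G$ into $\Aut((\Sigma \times B)^\Z)$ and proving $A \wr G \in \mathcal{G}$.

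The real obstacle is conceptual rather than technical: the definition of $A$ithfulness has been tailored precisely so that the base group $\bigoplus_G A$ can be implemented as additive corrections to a single $B$-valued track along $G$-orbits while still acting faithfully. Once the right action is written down, the wreath commutation relations follow automatically from abelianness of $B$, and the remaining steps are direct unwindings of definitions.
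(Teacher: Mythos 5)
Your proposal is correct and follows essentially the same route as the paper: the same product alphabet $\Sigma \times B$, the same definitions of the $G$- and $A$-actions via $\theta$, the same verification of the wreath relations through the explicit formula for the conjugates $\hat a^{\hat g}$, and the same faithfulness argument reducing to injectivity of $\phi_{\theta,B}$ at coordinate $0$. No substantive differences.
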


\begin{proof}
Suppose $G$ and $A$ are disjoint. Let $\theta : \Sigma^\Z \to \Hom(A,B)$ be the proof of non-$A$-cancellation. We pick the alphabet $\Sigma \times B$, and to $g \in G$ associate $\hat g \in \Aut((\Sigma \times B)^\Z)$ by $\hat g(x,y) = (gx, y)$; clearly $g \mapsto \hat g$ is a homomorphism. To $a \in A$ associate $\hat a$ by
\[ \hat a(x, y)_i = (x_i, \theta(\sigma^i(x))(a) + y_i) \]
The map $a \mapsto \hat a : A \to \Aut((\Sigma \times B)^\Z)$ is a homomorphism because
\begin{align*}
\widehat{(a + b)}(x, y)_i &= (x_i, \theta(\sigma^i(x))(a + b) + y_i) \\
&= (x_i, \theta(\sigma^i(x))(a) + \theta(\sigma^i(x))(b) + y_i) \\
&= \hat b(\hat a(x, y))_i.
\end{align*}
We have
\[ \hat a^{\hat g}(x, y)_i = (x_i, \theta(\sigma^i(gx))(a) + y_i), \]
from which it is clear that any $\hat a^{\hat g}$ and $\hat b^{\hat g'}$ commute for $a,b \in A, g,g' \in G$. These are the standard relations of the wreath product, and thus we have obtained an action of the wreath product.

We need to show the faithfulness of this action. For this, suppose $w \in A \wr G$ and $\hat w = \id_{(\Sigma \times B)^\Z}$. We will show that $w$ is indeed trivial. Clearly the natural homomorphism from $A \wr G$ to $G$ maps $w$ to the identity, as $G$ acts by its original non-$A$-cancelling (thus faithful) action on the $\Sigma^\Z$-component of our full shift $(\Sigma \times B)^\Z)$.

Thus, we can write $w = \prod_{j = 1}^n a_j^{g_j}$ for some $n$, and some $g_j \in G, a_j \in A$, where we may assume $g_j \neq g_j$ for $i \neq j$ and $a_i \neq 0$ for all $i$. By the formula for conjugates $\hat a^{\hat g}$, we have
\[ \hat w(x, y)_i = (x_i, \sum_{j = 1}^n \theta(\sigma^i(g_j x))(a_j) + y_i) = (x_i, y_i) \]
for all $x, y$, so $\sum_{j = 1}^n \theta(\sigma^i(g_j x))(a_j) = 0$ for all $x \in \Sigma^\Z$ and $i \in \Z$. Setting $i = 0$ we have
\[ \sum_{j = 1}^n \theta(g_j x)(a_j) = \sum_{g \in G} \theta(g x)(f(g)) = 0 \]
where $f = \sum_{j = 1}^n a_j \cdot g_j$, so by non-$A$-cancellation $f = 0$, meaning $w = \id$. Thus $w \mapsto \hat w$ is injective.
\end{proof}

Combining with the results of the previous section, we obtain that for any finite abelian group $A$ and any $n \in \N$, we have $A \wr F_n \in \mathcal{G}$ and $A \wr \Z^n \in \mathcal{G}$. 

While we state the theorem only in the setting of automorphism groups of full shifts, the proof applies more generally:

\begin{example}
Let $X$ be the Ledrappier subshift, and let $Y \subset \{0,1,2\}^{\Z^2}$ be the set of configurations where $y_i\neq 0 \wedge y_j \neq 0 \implies i = j$. We showed in Example~\ref{ex:Ledra} that the shift action on $X$ is non-$A$-cancelling for $A = \Z_2$. Analogously to the above proof we obtain $\Z_2 \wr \Z^2 \leq \Aut(X \times Y)$. \qee
\end{example}

The method of this section cannot cover groups like $\Z \wr \Z$, $\Z_2 \wr (\Z_2 \wr \Z)$, suggesting the following questions. 

\begin{question}
Is $\Z \wr \Z$ in $\mathcal{G}$?
\end{question}


\begin{question}
Is $\Z_2 \wr (\Z_2 \wr \Z)$ in $\mathcal{G}$?
\end{question}

We solve these in a follow-up paper \cite{Sa23} (using a different method) and show that even $\Z_2 \wr (\Z_2 \wr \Z)$ and $\Z \wr (\Z_2 \wr \Z)$ are in $\mathcal{G}$.

\section{The one-sided case}

\begin{theorem}
\label{thm:OneSidedGraphProducts}
For a finite graph $\Gamma$ and groups $G_i \in \mathcal{G}_{n_i}'$ we have $(G_1,G_2,\ldots,G_k)^\Gamma \in \mathcal{G}_{\max n_i + 1}'$.
\end{theorem}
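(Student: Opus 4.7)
The plan is to imitate the finite case of Theorem~\ref{thm:GraphProducts}, with the single extra alphabet symbol $\#$ playing the combined role of segment separator, type indicator, and interaction mediator. Pick $\Sigma = \Sigma' \cup \{\#\}$ with $|\Sigma'| = N := \max_i n_i$, identify each $\Sigma_i$ with a subset of $\Sigma'$ after relabelling, and view $x \in \Sigma^\N$ as a sequence of \emph{segments} -- maximal non-empty finite runs in ${\Sigma'}^*$ -- separated by runs of $\#$s. To keep the construction one-sided, encode the ``type'' $i \in \{1,\ldots,k\}$ of a segment as the length of the $\#$-run immediately to its right; this way the type of the segment containing any given position can be read off by looking only rightwards.

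For $g \in G_i$ with one-sided local rule $F_g$ of radius $r$, I would define $\hat g \in \Aut(\Sigma^\N)$ to apply $F_g$ at a cell $j$ only when a right-readable safety predicate holds (the ambient segment is of type $i$, its content lies in $\Sigma_i$, and enough padding in $\Sigma_i$ remains to the right of $j$ to run both $F_g$ and the inverse rule $F_{g^{-1}}$), and to fix the cell otherwise. Since both $g$ and $g^{-1}$ are one-sided CAs, their local rules have inverses with the same kind of safe-region structure, so $\hat g^{-1}$ is again a one-sided CA and $\hat g \hat g^{-1} = \hat g^{-1} \hat g = \id$ once the safety predicates are made to line up. Non-commuting interactions for $\{i,j\} \notin E$ are then added by having $\hat g$ additionally perturb the rightmost cell of any adjacent preceding type-$j$ segment as a function of the change it effects on the leading content of its own type-$i$ segment -- this is one-sided-accessible because the rightmost cell of a segment sees through the separating $\#$-run into the next segment by looking right.

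Homomorphy from the graph product into $\Aut(\Sigma^\N)$ is then automatic -- commuting $G_i, G_j$ produce commuting $\hat g, \hat h$ because each acts trivially off its own segment type -- and faithfulness is proved by the same induction on the syllable count of a reduced word as in Theorem~\ref{thm:GraphProducts}: start from a test configuration whose rightmost segment matches the type of the rightmost syllable, and successively propagate a single-cell perturbation leftwards, one segment per syllable, until the change reaches position $0$.

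The main obstacle is one-sidedness itself: a one-sided CA does not act as an automorphism on a finite word (the output shrinks by the radius), so $\hat g$ is forced to fix a buffer of cells at the right end of every segment, and the non-commuting interaction must transmit perturbations leftwards using only rightward-reading rules. Making the ``safe'' regions of $\hat g$ and $\hat g^{-1}$ match up exactly so that the composition is the identity cell-by-cell, and threading the inter-segment interaction across a $\#$-region without two-sided reads, is the delicate bookkeeping step; after that, the construction and the inductive faithfulness argument carry over essentially unchanged from the two-sided proof.
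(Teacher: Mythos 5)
Your overall architecture (segments separated by $\#$-runs, right-readable type information, interaction by perturbing the rightmost cell of the preceding segment, and the same induction for faithfulness) is the one the paper uses, but the central mechanism you propose for acting on a finite segment does not work. You have $\hat g$ apply the local rule $F_g$ only where a safety margin remains and \emph{fix} a buffer of cells at the right end of the segment. This fails in two ways. First, it is not invertible: for a cell $j$ whose $F_{g^{-1}}$-window in the image overlaps the fixed buffer, the word read by $F_{g^{-1}}$ is a hybrid of genuinely $F_g$-computed cells and untouched buffer cells, hence is not a window of $g(x)$ for any $x$, and $F_{g^{-1}}$ need not return $w_j$; no alignment of safety predicates repairs this, because the buffer contents themselves must change for the composition to be the identity. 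Second, even if invertibility were arranged for each $g$ separately, $g \mapsto \hat g$ would not be a homomorphism on $G_i$: the buffer size depends on the radius of the element, which is unbounded over an infinite $G_i \in \mathcal{G}'_{n_i}$, so $\hat g \hat h$ and $\widehat{gh}$ disagree near the right end of a segment. The paper's mechanism is different and essential: the last symbol $a$ of a segment $ua$ is interpreted as an infinite constant tail, and $\hat g$ replaces $ua$ by $vb$ where $g(ua^\infty) = vb^\infty$. Since a one-sided automorphism sends constant configurations to constant configurations, it preserves the set of configurations that are constant from position $|u|$ onwards, so this is an honest restriction of the $G_i$-action to an invariant set and is automatically bijective and a homomorphism.

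A second, smaller gap: encoding the type of a segment solely as the length of the $\#$-run to its \emph{right} means that a cell deep inside a long segment must look arbitrarily far right to learn its type, so the resulting rule has no finite radius and is not continuous. The paper avoids this by interleaving, writing a type-$i$ segment as $w_0\#^i w_1 \#^i \cdots w_{|w|-1}\#^i$, so every data cell reads its type within bounded distance; the inter-segment $\SSS$-symbol is then coded by a longer run $\#^{mk}$ appended to the segment. With these two repairs your plan coincides with the paper's proof.
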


\begin{proof}
The construction for finite $\Gamma$ in the proof of Theorem~\ref{thm:GraphProducts} works almost directly, the main difference being that we replace the conveyor belts with words. We see $\Aut(\Sigma^\N)$ as the subgroup of $\Aut(\Sigma^\Z)$ containing those $f$ such that both $f$ and $f^{-1}$ are one-sided as automorphisms of $\Sigma^\Z$.

First, we give a construction with a large alphabet. Suppose $|B_i| = n_i$ 
with $B_i \cap B_j = \emptyset$ for $i \neq j$ and let $\Sigma = \SSS \sqcup \bigsqcup_i B_i$ where $\SSS$ is the disjoint union of sets of functions $B_i^{B_j}$. If $g \in G_i$, on segments of type~$i$ (maximal words over the subalphabet $B_i$) $\hat g$ applies the natural action, interpreting the last symbol as a constant tail: the segment contents $ua \in B_i^{\ell}, a \in B_i$, is replaced by $vb \in B_i^{\ell}, b \in B_i$, where $g(ua^\infty) = vb^\infty$.

On segments of type~$j$, $j \neq i$, we modify the last symbol as in Theorem~\ref{thm:GraphProducts} when the leftmost symbol of a segment of type~$i$ is changed: if $c$ is changed to $d$ when $\hat g$ is applied, then in the segment of type~$j$ on the left we change $a$ to $a + s(d) - s(c)$. It is clear that $\hat g$ is a one-sided automorphism, and the proof of correctness is analogous to that in Theorem~\ref{thm:GraphProducts}.

Now, let us optimize the alphabet size. First, we need not actually have the individual symbols know the type of the segment they belong to, as long as they can tell their type by looking to the right. Thus, we can take an alphabet of size $\max_i n_i + 1$, use the same symbols for all segments, and use the one extra symbol, say $\#$, to denote the type of the segment, representing $w \in B_i^*$ by $w_0 \#^i w_1 \#^i \cdots \#^i w_{|w|-1} \#^i$.

Similarly, the large set $\SSS$ can be replaced by words of the form $\#^{m k}$ for $m = 1, 2, \dots, |\SSS|$, since $i \in \{0,1,\dots,k-1\}$ and $m$ can be deduced from $\#^i \#^{m k}$ (interpreting $m = 0$ as the lack of an $\SSS$-symbol).
\end{proof}

Apart from some trivial cases, we do not know if increasing the alphabet is necessary, nor whether it is possible to do infinite products.

\begin{remark}
\label{rem:Reduction}
The radii of the automorphisms are rather massive in this construction. If desired, this can be avoided by using two special symbols instead of one, and coding the lengths of the $\#^*$-runs in binary, giving embeddings of graph products of groups $G_i \in \mathcal{G}_{n_i}'$ in $\mathcal{G}_{\max n_i + 2}'$ with more reasonable radii. With a bit more work, one can optimize both the alphabet size and the radii simultaneously, by using the basic size $\max n_i$ alphabet also for coding the lengths of runs. \qee
\end{remark}

\begin{theorem}
Every right-angled Coxeter and Artin group is in $\mathcal{G}'_3$.
\end{theorem}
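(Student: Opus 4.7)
The plan is to reduce both families to a single application of Theorem~\ref{thm:OneSidedGraphProducts}. Every right-angled Coxeter group $W_\Gamma$ is by definition the graph product $(\Z_2, \Z_2, \ldots, \Z_2)^\Gamma$, and the bit-flip map $f(x)_i = 1 - x_i$ is a one-sided reversible cellular automaton on $\{0,1\}^\N$ (it has radius $0$ and is its own inverse), so $\Z_2 \in \mathcal{G}'_2$. Assuming $\Gamma$ is finite, Theorem~\ref{thm:OneSidedGraphProducts} then immediately gives $W_\Gamma \in \mathcal{G}'_{\max n_i + 1} = \mathcal{G}'_3$.

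For right-angled Artin groups, the direct route fails, because the automorphism group of $\{0,1\}^\N$ is finite (all one-sided reversible CAs on the binary alphabet are just the bit-flip or identity), hence $\Z \notin \mathcal{G}'_2$; pushing $\Z \in \mathcal{G}'_3$ through Theorem~\ref{thm:OneSidedGraphProducts} would only give $\mathcal{G}'_4$. The trick I would use is the classical Davis--Januszkiewicz embedding of right-angled Artin groups into right-angled Coxeter groups: given $\Gamma = (V, E)$, form $\Gamma' = (V', E')$ where $V' = V \times \{0,1\}$ and $\{(v, i), (w, j)\} \in E'$ iff $v \neq w$ and $\{v, w\} \in E$. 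One verifies that the assignment $x_v \mapsto s_{(v,0)} s_{(v,1)}$ extends to an injective homomorphism $A_\Gamma \hookrightarrow W_{\Gamma'}$: the pair $s_{(v,0)}, s_{(v,1)}$ is non-adjacent in $\Gamma'$ so $s_{(v,0)} s_{(v,1)}$ has infinite order; if $\{v,w\} \in E$ then all four generators $s_{(v,i)}, s_{(w,j)}$ pairwise commute, so $x_v, x_w$ commute; and if $\{v,w\} \notin E$ then these four involutions generate a free Coxeter subgroup of $W_{\Gamma'}$ in which the products $s_{(v,0)} s_{(v,1)}$ and $s_{(w,0)} s_{(w,1)}$ have no nontrivial relation.

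Assembling, for finite $\Gamma$ I get $A_\Gamma \leq W_{\Gamma'} \in \mathcal{G}'_3$ by the Coxeter case applied to the finite graph $\Gamma'$, and since $\mathcal{G}'_3$ is clearly closed under subgroups, $A_\Gamma \in \mathcal{G}'_3$. The only genuine piece of content beyond invoking the previous theorem is verifying (or citing) the Davis--Januszkiewicz embedding, which I expect to be the least routine step to present cleanly, though the verification itself is elementary from the normal form for graph products. If the statement is intended to cover countably infinite $\Gamma$, the plan still works for the Coxeter case provided an infinite analogue of Theorem~\ref{thm:OneSidedGraphProducts} is available, and the embedding doubles a countable graph to a countable graph, so the Artin case reduces to the Coxeter case in the same way.
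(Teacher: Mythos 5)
Your proposal is correct and follows essentially the same route as the paper: $\Z_2 \in \mathcal{G}'_2$ via the bit-flip, Theorem~\ref{thm:OneSidedGraphProducts} for the Coxeter case, and the Davis--Januszkiewicz doubling embedding of right-angled Artin groups into right-angled Coxeter groups (which the paper simply cites as \cite{DaJa99}) for the Artin case. Your explicit spelling-out of the embedding and your caveat about infinite $\Gamma$ are both consistent with the paper's (terser) treatment.
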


\begin{proof}
We have $\Z_2 \in \mathcal{G}_2'$, since $f(x)_i = 1-x_i$ defines an automorphism of the binary full shift. If $\Gamma$ is a graph, then $(\Z_2,\Z_2,\dots,\Z_2)^\Gamma$ is, by definition, just the right-angled Coxeter group $C(\Gamma)$. The previous theorem then shows that that $\Aut(\{0,1,2\}^\N)$ contains every right-angled Coxeter group. Right-angled Artin groups are clearly subgroups of right-angled Coxeter groups (see \cite{DaJa00} for a finite-index embedding).
\end{proof}

\begin{question}
Which Coxeter groups (or Artin groups) are in $\mathcal{G}'_n$ for each $n$?
\end{question}

There is a bound on orders of finite-order elements \cite{BoFrKi90}, which forbids some Coxeter groups, but it is likely that there are additional restrictions.

\begin{example}
As right-angled Coxeter and Artin groups are important families of groups, we explain the construction explicitly for RACGs $(G_0,G_1,\ldots,G_{k-1})^\Gamma$, where $G_i \cong \Z_2$ for all $i = 0, \dots, k-1$. Then $\hat g (x)_t = a$ is determined as follows: If $x_t = 2$, then $a = 2$. If $x_t \in \{0,1\}$, and the maximal segment of $2$s to the right is $2^j$, then let $j = i' + mk$ where $0 \leq i' < k$ and $m \in \{0,1\}$. If this cannot be done since the $2^*$-segment is too long, then set $a = x_t$.

Suppose then that we find $i'$ and $m$. Then if $i' = i$, set $a = 1-x_t$ (we are inside a segment of type~$i$). If $i' \neq i$ and $m = 0$, set $a = x_t$ (we are properly inside a segment of type~$i'$). If $i' \neq i$, $m = 1$ and $\{i, i'\} \in E$ set $a = x_t$ (we are at the right boundary of a segment of type~$i'$, but $G_i$ commutes with $G_{i'}$).

If $i' \neq i$, $m = 1$ and $\{i, i'\} \notin E$, then consider the configuration to the right of the $2^j$-word. It starts with an element of $\{0,1\}$, so suppose it is in $\{0,1\}2^{j'}$ for some maximal $j'$ (again if the run is very long, set $a = x_t$). Let $j' = i'' + m' k$. If $i'' \neq i$, then set $a = x_t$. If $i'' = i$ then check whether $\hat g$ will flip this bit. Set $a = 1 - x_t$ if it will, otherwise set $a = x_t$.

Analogously to the proof of Lemma~\ref{lem:FreeProduct}, it is easy to see that the actions has unique visits when $\Gamma$ has no edges. \qee
\end{example}

\begin{remark}
\label{rem:YesCanBeSF}
In particular, letting $\Gamma$ be the complement of the disjoint union of $n$ copies of the $k$-clique in the above theorem, we get
\[  \Z^n, F_k \leq F_k^n \leq (\Z_2 * \Z_2 * \cdots * \Z_2)^n = (\Z_2,\Z_2,\dots,\Z_2)^\Gamma \in \Aut(\{0,1,2\}^\N). \]
This action does not have unique visits in general, but similarly as in Section~\ref{sec:SFActions}, we see that in this special case, when applied to an edgeless graph, we get an action with unique visits for the free product of copies of $\Z_2$. As in Lemma~\ref{lem:Product} it is easy to see that direct products of actions with unique visits have unique visits. From these observations, we get that $\Z^n$ and $F_k$ admit actions with unique visits in $\mathcal{G}'_\infty$. \qee
\end{remark}

\begin{theorem}
\label{thm:OneSidedWreathProducts}
If $A$ is finite abelian and $G \in \mathcal{G}'_\infty$ has non-$A$-cancellation, then $A \wr G \in \mathcal{G}'_\infty$.
\end{theorem}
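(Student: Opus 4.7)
The plan is to mimic the two-sided construction of Theorem~\ref{thm:WreathProducts} verbatim, and verify that all the automorphisms produced are actually one-sided in both directions, so that they lie in $\Aut((\Sigma \times B)^\N)$ rather than just $\Aut((\Sigma \times B)^\Z)$. Concretely, I would start with $G \leq \Aut(\Sigma^\N)$ and a continuous witness $\theta : \Sigma^\N \to \Hom(A,B)$ of $A$ithfulness (replacing $\Sigma^\Z$ by $\Sigma^\N$ in Definition~\ref{def:Aithful} in the obvious way, or pulling $\theta$ back from a factor through the $\N$-action). Then on the alphabet $\Sigma \times B$ I define $\hat g(x,y) = (gx, y)$ for $g \in G$ and $\hat a(x,y)_i = (x_i,\, \theta(\sigma^i(x))(a) + y_i)$ for $a \in A$, exactly as in the two-sided proof.

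The first step is to check that $\hat g, \hat a \in \Aut((\Sigma \times B)^\N)$. For $\hat g$ this reduces to observing that since $g$ and $g^{-1}$ are one-sided on $\Sigma^\N$ and $\hat g$ acts as the identity on the $B$-track, both $\hat g$ and $\hat g^{-1}$ are one-sided. For $\hat a$, continuity of $\theta : \Sigma^\N \to \Hom(A,B)$ means $\theta(\sigma^i x)$ depends only on $x_i, x_{i+1}, \dots, x_{i+r}$ for some fixed $r$, so $\hat a$ is one-sided with radius $r$, and its inverse is $\widehat{-a}$ which is of the same form. The second step is to verify the relations of $A \wr G$: that $a \mapsto \hat a$ and $g \mapsto \hat g$ are homomorphisms and that $[\hat a^{\hat g}, \hat b^{\hat h}] = e$ whenever $g \neq h$. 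These are the same coordinate-wise computations as in Theorem~\ref{thm:WreathProducts} and transfer without change, using that the $B$-component is an abelian group and that conjugation by $\hat g$ just reparameterizes $\theta$ along the $G$-orbit.

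The third step is faithfulness. Given a word $w = \prod_{j=1}^n a_j^{g_j} \in A \wr G$ with distinct $g_j$ and nonzero $a_j$ acting as the identity, read off the $B$-component at coordinate $0$: for every $x \in \Sigma^\N$,
\[ \sum_{j=1}^n \theta(g_j x)(a_j) = 0, \]
so by $A$ithfulness of the $G$-action on $\Sigma^\N$ the element $f = \sum_j a_j \cdot g_j \in A[G]$ vanishes, contradicting the assumptions on $w$. The resulting action lives on $(\Sigma \times B)^\N$, so $A \wr G \in \mathcal{G}'_{|\Sigma||B|} \subset \mathcal{G}'_\infty$.

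The only place where I expect a minor subtlety, rather than a real obstacle, is the definition of $A$ithfulness on $\Sigma^\N$: one needs $\theta$ to be a \emph{one-sided} continuous function, but since $\Sigma^\N$ already is one-sided every continuous $\theta : \Sigma^\N \to \Hom(A,B)$ has this property automatically, so no extra work is required. Everything else in the two-sided proof is purely algebraic and transfers verbatim.
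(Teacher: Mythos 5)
Your proposal is correct, but it uses a genuinely different construction from the paper's. You carry the two-sided product-alphabet construction of Theorem~\ref{thm:WreathProducts} over verbatim: alphabet $\Sigma \times B$, with $\hat g$ acting on the first track and $\hat a$ adding $\theta(\sigma^i x)(a)$ to the second. Your key observation --- that continuity of $\theta$ on $\Sigma^\N$ forces it to depend only on coordinates $0,\dots,r$, so that $\hat a$ and $\hat a^{-1} = \widehat{-a}$ are automatically one-sided, while $\hat g$ is one-sided in both directions because $g$ and $g^{-1}$ are --- is exactly what is needed, and the algebraic verification (the wreath product relations and faithfulness via $A$ithfulness) transfers unchanged, so the construction does land in $\Aut((\Sigma\times B)^\N)$. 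The paper instead uses the disjoint-union alphabet $A^d \sqcup N$: it lets $g \in G$ act on maximal finite $N$-segments by interpreting the last symbol as an infinite constant tail (the one-sided analogue of the conveyor belt, reusing the machinery of Theorem~\ref{thm:OneSidedGraphProducts}), and lets $\hat a$ modify an $A^d$-symbol according to $\theta$ evaluated on the decoded $N$-segment to its right. What the paper's version buys is a smaller alphabet, $|N| + |A|^d$ rather than $|N|\cdot|B|$, which feeds into the alphabet-size discussion following the proof; since the theorem only asserts membership in $\mathcal{G}'_\infty$, your version proves the stated result just as well. Two small points to tidy up: (i) in the faithfulness step you should first note that any $w$ in the kernel projects to $e_G$ under the natural map $A \wr G \to G$ (because the first track realizes the faithful defining action of $G$ on $\Sigma^\N$), so that $w$ genuinely lies in the base group $\bigoplus_G A$ before you write $w = \prod_j a_j^{g_j}$; (ii) by the lemma stating that one may always take $B = A^d$, you could normalize $B$ as the paper does, though any finite abelian $B$ works in your construction.
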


\begin{proof}
Let $N = \{0,1,\dots,n-1\}$ and suppose $G$ acts with non-$A$-cancellation on $N^\N$ by automorphisms. Let $\theta : N^\N \to \Hom(A, B)$ be the continuous function from the definition of non-$A$-cancellation. For our embedding we pick the alphabet $\Sigma = B \sqcup N$. If $g \in G$, $\hat g$ acts on maximal finite segments over $N$ as in the above proof (so the word $wa \in N^*$ with $a \in N$ represents $wa^\infty$, and we conjugate the natural action through this identification). Symbols in $B$ are not modified by $\hat g$.

If $a \in A$, $\hat a$ acts trivially on symbols in $N$. If $x_i \in B$, then if also $x_{i+1} \in B$ we set $\hat a(x)_i = x_i$, while if $x_{i+1} \in N$ we interpret a prefix of $x_{[i+1, \infty)}$ as a configuration in $y \in N^\N$ the same way $\hat g$ does, i.e.\ if $x_{[i+1, \infty)}$ is a one-way infinite word over $N$, then directly use this configuration, and otherwise take the maximal finite segment over $N$ and interpret the last symbol as being repeated infinitely. Now set $\hat a(x)_i = x_i + \theta(y)(a)$. The proof that this gives an embedding of the wreath product is similar to Theorem~\ref{thm:WreathProducts}.
\end{proof}

The non-$A$-cancellation certificate of $G$ might use a large group $B$, so it is difficult to include alphabet sizes in the statement. The proof, combined with ideas from the proof of Theorem~\ref{thm:OneSidedGraphProducts}, shows that $\max_i p_i^{e_i} + 1$ symbols suffice in addition to $N$ if $A = \prod_i \Z_{p_i}^{e_i}$, because we can use $B = A^d$ (Lemma~\ref{lem:BAd}) which acts faithfully by disjoint cycles on a finite set of cycles of cardinalities in $p_i^{e_i}$, and because we can code the type of the cycle using the alphabet itself.

\begin{theorem}
If $A$ is finite abelian and $n \in \N$, then $A \wr F_n, A \wr \Z^n \in \mathcal{G}'_\infty$.
\end{theorem}

\begin{proof}
This follows from the previous theorem and Remark~\ref{rem:YesCanBeSF}.
\end{proof}

\bibliographystyle{plain}
\bibliography{bib}{}

\end{document}